\newtheorem{thm}{Theorem}[section]
\newtheorem{lemma}[thm]{Lemma}
\newtheorem{cor}[thm]{Corollary}
\theoremstyle{definition}
\theoremstyle{definition}
\numberwithin{equation}{section}
\def\K1{K_{\alpha, \beta}}
\def\Bo{B_R}
\def\B2x{B_{2R}(x)}
\def\xR{\left(\frac{x}{R}\right)}
\def\ds{\displaystyle}
\def\om{\Omega}
\def\R{\mathbbm{R}}
\newcommand{\abs}[1]{\lvert#1\rvert}
\title{Non-homogeneous fourth order elliptic inequalities with the convolution term}
\author[ ]{Zhe Yu\footnote{ORCID: 0000-0002-2261-4981} }
\affil[ ]{School of Mathematics and Statistics}
\affil[ ]{University College Dublin }
\affil[ ]{Belfield, Dublin 4, Ireland}
\affil[ ]{E-mail: {\tt zhe.yu@ucdconnect.ie}}
\date{}
\begin{document}

\maketitle

\begin{abstract}
\vspace{0.3cm} 
\noindent We are concerned with the study of the twin non-local inequalities featuring non-homogeneous differential operators
$$\displaystyle -\Delta^2 u + \lambda\Delta u \geq (K_{\alpha, \beta} * u^p)u^q \quad\text{ in } \mathbbm{R}^N (N\geq 1),$$ 
and
$$\displaystyle \;\;\,\Delta^2 u - \lambda\Delta u \geq (K_{\alpha, \beta} * u^p)u^q \quad\text{ in } \mathbbm{R}^N (N\geq 1),$$
with parameters $\lambda, p, q >0$,  $0\leq \alpha \leq N$ and $\beta>\alpha-N$. In the above inequalities the potential $K_{\alpha,\beta}$ is given by $K_{\alpha, \beta}(x) = |x|^{-\alpha}\log^{\beta}(1 + |x|)$ while $K_{\alpha, \beta} * u^p$ denotes the standard convolution operator in $\mathbbm{R}^N$. We discuss the existence and non-existence of non-negative solutions in terms of $N, p, q, \lambda, \alpha$ and $\beta$.
\end{abstract}

\noindent{\bf Keywords:} Higher order differential operators; suspension bridge problem; non-local inequalities; existence and nonexistence of solutions

\medskip

\noindent{\bf 2020 AMS MSC:} 35J30; 35A23; 35A01; 42B37;  35B33

\newpage

\section{Introduction}
This paper is concerned with the study of non-negative solutions to the following inequalities involving the bi-harmonic operator and with the convolution term:
\begin{equation}\label{P-}
-\Delta^2 u + \lambda \Delta u \geq (\K1 * u^p)u^q \quad\text{in }\;\;\R^N (N \geq 1),\tag{$P^-$}
\end{equation}
and 
\begin{equation}\label{P+}
\;\;\,\Delta^2 u - \lambda \Delta u \geq (\K1 * u^p)u^q \quad\text{in }\;\;\R^N (N \geq 1).\tag{$P^+$}
\end{equation}
The above inequalities assume that $N \geq 1$, $\lambda, p, q > 0$ and 
\begin{equation}\label{ptt}
\K1(x) = \abs{x}^{-\alpha}\log^{\beta}(1 + \abs{x}), \quad\text{where } 0 \leq \alpha \leq N,\, \beta > \alpha - N.
\end{equation} 
We note that 
$\K1(x) \in L^1_{loc}(\R^N)$ and 
$$
\begin{aligned}
&\K1(x) \simeq |x|^{-\alpha + \beta}\mbox{ as } |x| \to 0,\\[0.2cm]
&\K1(x) \simeq |x|^{-\alpha}\log^{\beta} |x| \mbox{ as } |x| \to \infty.
\end{aligned}
$$
The quantity $\K1 * u^p$ represents the convolution operation in $\R^N$ given by
\[
(\K1 \ast u^p)(x)=\int_{\R^N} \frac{u^p(y)\log^{\beta}(1 + \abs{x - y})}{\abs{x-y}^{\alpha}} dy \quad \text{for all } x \in \R^N.
\]
This paper will examine classical non-negative solutions of \eqref{P-} (respectively of \eqref{P+}), that is, functions $u \in  \mathcal{C}^4(\R^N)$ such that 
\begin{equation}\label{KKK}
u(x)\geq 0\;, \;\; \big(\K1 * u^p\big)(x)<\infty\quad\mbox{for all }\; x\in \R^N,
\end{equation}
and $u$ satisfies \eqref{P-} (respectively of \eqref{P+}) pointwise in $\R^N$.

For $\alpha \in (0, N)$ and $\beta = 0$ the potential $K_{\alpha, 0}$ becomes the standard Riesz potential which plays a crucial role in potential theory and partial differential equations. Beyond this setting, the case $\alpha = N$ is related to the exponential transform of a domain as introduced in \cite{GP03} (see also \cite{P03}). Precisely, given a domain $D \subset \R^N$, we set
\[E_D(x) = \exp\left\{-\frac{2}{\abs{\partial B_1}}\int_C K_{N, 0} (x- y) dy\right\} \text{ for all } x \in \R^N \setminus \overline{D}. \]
By polarization, this definition is extended to points $x \in D$ by
\[H_D(x) = \lim\limits_{R \to \infty} \frac{1}{R^2E_{{B_R}\setminus D(x)}}.\]
We ascertain from \cite{GP03} that $E_\om$ is superharmonic in $\R^N \setminus D$, while $H_D$ is subharmonic in $D$. Also, $H_D$ has explicit expression for common domains $D$ whose boundary is described by quadratic polynomials. In particular, when $D$ is the unit ball $B_1$ in $\R^N$, one has $H_{B_1} = (1 - \abs{x}^2)^{-1}$. The exponential transform $E_D$ was used in \cite{GP98} to study the regularity of free boundaries in two dimensions. Furthermore, potentials of type $K_{N,\beta}$ are related to logarithmic Besov spaces \cite{CDT16, LYZ21}.

The study of various non-homogeneous higher order differential operators has attracted a significant attention in recent years. Results involving the operator $\Delta^2 + \Delta$ can be found in \cite{FJMM22, LW23, MO24}. In the present work, we study a class of differential inequalities involving the operators $-\Delta^2 + \lambda \Delta$ and $\Delta^2 - \lambda \Delta$, $\lambda>0$.  The operator $\Delta^2 u - \lambda\Delta u$ appears in Mechanics, in the study of suspension bridges in the early 20th century. In dimension $N = 1$, we have
\[\Delta^2 u - \lambda \Delta u = u_{xxxx} - \lambda u_{xx},\]
and the suspension bridges equation reads
\begin{equation}\label{Melan}
u_{xxxx} - (H + h(u)) u_{xx} + \frac{q}{H}h(u) = p(x),\quad \text{where } 0 \leq x \leq L.
\end{equation}
In above equation, $L>0$ is the length of the beam, $q$ and $p(x)$ are the dead and live loads per unit length applied to the beam, and the non-local term $h(u)$ is the additional tension in the cable produced by the live load $p$. The equation \eqref{Melan} is called the Melan equation \cite{Melan} in suspension bridges theory introduced in 1913  (see also \cite{BFG16, F22, GS18} for further results on this topic as well as \cite{A22, GS21} for suspension bridge equation with non-local terms). In 1980s, the authors demonstrated that the nonlinear feature of such non-homogeneous operator provided a good fit in the study of travelling waves in suspension bridges in \cite{LM90} and \cite{MW87}. Later, in \cite{LM94, MP97}, the problem with a more general nonlinear non-negative term $f(x, u)$, such that
\[\left\{\begin{aligned}
&\Delta^2 u - \lambda \Delta u = f(x, u)&\quad \text{in }&\Omega,\\
&\Delta u = 0, u = 0&\quad \text{on }&\partial \Omega,
\end{aligned}\right.\]
was investigated in the case where $\Omega$ is smooth and bounded. Ground state solutions of the above equation were discussed in the recent work \cite{FS22}.

Our approach is based on:
\begin{itemize}
    \item Obtaining local properties of solutions. We show that any non-negative solution $u$ of \eqref{P-} satisfies $\Delta u\geq 0$ in $\R^N$ and any non-negative solution $u$ of \eqref{P+} satisfies $-\Delta u\geq 0$ in $\R^N$.
    \item We use Harnack inequalities \cite{Tru67} in order to derive estimates for the non-existence of a solution.
    \item We provide further integral estimates involving logarithmic terms (see Lemma \ref{lm1} below) that extends the previous results in the literature (see \cite{BP01, CZ23, DMMO24, F1, F2, F3, GKS20, MGZ23, MS13, QT21}).
\end{itemize}
It turns out that the presence of the logarithmic term in \eqref{P+} leads to different conditions for the existence of a non-negative solution. 

We first discuss the non-existence of non-negative solutions to \eqref{P-}. To the best of our knowledge, this is the first result concerning the differential operator  $-\Delta^2 + \lambda \Delta$.

\begin{thm}\label{thm1} {\rm (Non-existence for \eqref{P-})}\\
Assume that $0 < \alpha < N$, $p, q > 0$ and $\lambda > 0$. Then \eqref{P-} has no non-negative solutions in $\R^N$ if one of following conditions hold: 
\begin{itemize}
\item[\rm (i)] $p \geq 1$; 
\item[\rm (ii)] $p < 1$ and $u$ is bounded;
\item[\rm (iii)] $u$ is radial.
\end{itemize}
\end{thm}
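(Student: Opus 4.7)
\medskip
\noindent\textbf{Proof proposal.}
The plan follows the three-pronged strategy outlined in the introduction. As \emph{Step $0$}, it is established as a separate lemma that any non-negative solution $u$ of \eqref{P-} satisfies $\Delta u\ge 0$ in $\R^N$. Indeed, setting $v:=\Delta u$, the inequality \eqref{P-} rewrites as $-\Delta v+\lambda v\ge (\K1\ast u^p)u^q\ge 0$, and positivity of $v$ follows from a maximum principle for the coercive operator $-\Delta+\lambda$ with $\lambda>0$ (the delicate point being the absence of boundary / decay conditions on $\R^N$). Granting this, $u$ is subharmonic. I argue by contradiction assuming $u\not\equiv 0$; by translation invariance of \eqref{P-} I may take $u(0)>0$.

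Part (iii) is the cleanest. A radial $u\in \mathcal{C}^4$ gives $\Delta u=r^{1-N}(r^{N-1}u')'$, and combined with $u'(0)=0$ and $\Delta u\ge 0$ yields $u'\ge 0$, so $u(r)\ge u(0)>0$ for every $r\ge 0$. Then
$$
(\K1\ast u^p)(0)\ \ge\ u(0)^p\int_{\R^N}\K1(y)\,dy\ =\ +\infty,
$$
since $\int^{\infty}r^{N-1-\alpha}\log^{\beta}(r)\,dr=+\infty$ for every $\beta\in\R$ when $\alpha<N$. This contradicts $(\K1\ast u^p)(0)<\infty$ in \eqref{KKK}.

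Parts (i) and (ii) rest on pairing the sub-mean-value property with a dyadic decomposition of $(\K1\ast u^p)(0)$. The first ingredient is the estimate
$$
\int_{B_R} u^p(y)\,dy\ \ge\ c_0\,R^N \qquad\text{for every }R\ge 1, \qquad (\star)
$$
with $c_0>0$. In case (i), since $p\ge 1$, the sub-mean-value inequality $|B_R|^{-1}\int_{B_R}u\ge u(0)$ combined with Jensen immediately gives $(\star)$ with $c_0=|B_1|u(0)^p$. In case (ii), from $u\le M$ and sub-mean-value a Chebyshev-type computation yields $|B_R\cap\{u\ge u(0)/2\}|\ge (u(0)/(2M))|B_R|$, from which $(\star)$ follows with $c_0=(u(0)/2)^p\,u(0)/(2M)\,|B_1|$. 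Next, with $A_k=\{2^k\le|y|<2^{k+1}\}$, the asymptotic behaviour \eqref{ptt} gives $\K1(y)\ge c_1 2^{-\alpha k}k^{\beta}$ on $A_k$ for $k\ge 1$; therefore
$$
(\K1\ast u^p)(0)\ \ge\ c_1\sum_{k\ge 1}2^{-\alpha k}k^{\beta}\int_{A_k}u^p(y)\,dy.
$$
The finiteness assumption forces each term to be uniformly bounded, $\int_{A_k}u^p\le C\,2^{\alpha k}k^{-\beta}$, hence
$$
\sum_{k=1}^{K}\int_{A_k} u^p\ \le\ C'\,2^{\alpha K}K^{-\beta}.
$$
Applying $(\star)$ with $R=2^{K+1}$ gives the matching lower bound $\sum_{k=1}^{K}\int_{A_k}u^p\ge c_0 2^{(K+1)N}-O(1)$, and combining these yields
$$
c_0\,2^{K(N-\alpha)}K^{\beta}\ \le\ C''
$$
for all large $K$, which is impossible since $\alpha<N$ and $c_0>0$. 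This is the type of logarithmic integral inequality I expect Lemma \ref{lm1} to formalise.

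The principal obstacle is the opening reduction $\Delta u\ge 0$. The maximum principle for $-\Delta+\lambda$ needs a global growth hypothesis (or a cutoff-based argument) since $v=\Delta u$ is not a priori small at infinity; this is where the most careful work lives. Beyond that, the three cases share a single mechanism: sub-mean-value concentrates enough mass of $u$ at infinity to collide, via the non-integrability of $\K1$ (whence the hypothesis $\alpha<N$), with the finiteness of the convolution prescribed in \eqref{KKK}.
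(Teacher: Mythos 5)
Your reduction to $\Delta u\ge 0$ in Step~$0$ is where the argument actually breaks, and you have correctly identified it as the crux but not resolved it; more importantly, the direction you sketch does not close. The bare maximum principle for $-\Delta+\lambda$ on all of $\R^N$ is false without a sign or growth restriction on $v=\Delta u$ itself: for instance $v(x)=-\cosh(\sqrt\lambda\,x_1)$ satisfies $-\Delta v+\lambda v=0$ yet is everywhere negative, and nothing in the problem gives an a priori bound on $v^-$ to feed a cutoff/Caccioppoli iteration. The paper's proof does not control $v$ directly at all. Instead it passes to spherical averages: if $\overline v(0)<0$, the radial inequality $(r^{N-1}\overline v')'\le \lambda r^{N-1}\overline v$ forces $\overline v$ to stay below $\overline v(0)<0$ for all $r$ (the set where $\overline v<0$ cannot terminate, by the $(r^{N-1}\overline v')'<0$ argument), and then integrating $\Delta\overline u=\overline v\le -c$ twice gives $\overline u(r)\le \overline u(0)-\frac{c}{2N}r^2\to -\infty$, contradicting $u\ge0$. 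So the hypothesis $u\ge0$ is exactly what replaces the missing decay on $v$; your proposal needs this idea, not a refined maximum principle for $v$. A secondary issue: in part (iii) you first translate so that $u(0)>0$, but translation destroys radiality, so you should instead pick some $r_0$ with $u(r_0)>0$ and use monotonicity from there, exactly as in the remaining cases.

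Once $\Delta u\ge 0$ is granted, your treatment of (i)--(iii) is correct and takes a slightly different, arguably cleaner, route than the paper. The paper invokes Trudinger's weak Harnack inequality to obtain $\int_{B_R}u^p\ge CR^N$ (and in case (ii) applies it to $u^{p+1}$), whereas you use only the sub-mean-value property plus Jensen for $p\ge1$ and a Chebyshev/boundedness argument for $p<1$; this is more elementary and self-contained given that subharmonicity is already in hand. Your dyadic decomposition of the convolution also sidesteps a small monotonicity gap in the paper's direct estimate $\int_{B_R}\K1 u^p\ge R^{-\alpha}\log^\beta(1+R)\int_{B_R}u^p$, which as written needs $\beta\le 0$ to pull $\log^\beta(1+R)$ out of the integral. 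So the back half of your argument is a genuine, valid alternative; the front half is missing its key idea.
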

From Theorem \ref{thm1}, we notice that in most of the cases related to $p\geq 1$ and $p<1$, the inequality \eqref{P-} has no non-negative solutions. 

The corresponding non-existence part for \eqref{P+} is presented in our next result below. 

\begin{thm}\label{thm2} {\rm (Non-existence for \eqref{P+})}\\
Assume $p, q > 0$ and $\lambda > 0$. Then \eqref{P+} has no non-negative solutions in $\R^N$ if one of following conditions hold: 
\begin{enumerate}[\rm(i)]
    \item $1 \leq N \leq 2;$ 
    \item $N \geq 3$,\quad$0\leq \alpha \leq 2\quad$and\quad $1 \leq p  < \frac{N-\alpha}{N - 2};$
    \item $N \geq 3$,\quad$0\leq \alpha \leq 2$,\quad$p = \frac{N-\alpha}{N - 2}\quad$and\quad$\beta \geq -1;$
    \item $N \geq 3$,\quad$p \geq 1$,\quad$p + q < \frac{2N - \alpha}{N - 2}\quad$and\quad$\beta > \alpha - N;$
    \item $N \geq 3$,\quad$p \geq 1$,\quad$p + q = \frac{2N - \alpha}{N - 2}\quad$and\quad$\beta > \frac{1}{p + q} - 1;$
    \item $N \geq 3$,\quad$0\leq \alpha < 2\quad$and\quad$1 < q  < \frac{N - \alpha}{N - 2};$
    \item $N \geq 3$,\quad$0\leq \alpha < 2$,\quad$q  = \frac{N - \alpha}{N - 2}\quad$and\quad$\beta > \frac{1}{q}-1;$
    \item $N \geq 3$,\quad$0\leq \alpha < 2$,\quad $p=\frac{N}{N-2}$,\quad$q  = \frac{N - \alpha}{N - 2}\quad$and\quad$\beta > -2 + \frac{1}{q};$
    \item $N \geq 3$,\quad$0\leq \alpha < 2$,\quad $p=\frac{N - \alpha}{N-2}$,\quad$q  = \frac{N}{N - 2}\quad$and\quad$\beta > -2 + \frac{1}{q}.$
\end{enumerate}
\end{thm}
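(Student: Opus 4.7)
The plan is to execute the three-step strategy previewed in the introduction. First I would establish the local property $-\Delta u\ge 0$ in $\R^N$ for every non-negative classical solution of \eqref{P+}; with that in hand, case (i) follows from a classical Liouville theorem for superharmonic functions, while cases (ii)-(ix) are attacked by a standard cutoff test-function argument coupled with a Harnack estimate in the spirit of \cite{Tru67} and the refined logarithmic integral inequality of Lemma \ref{lm1}.

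For the superharmonicity step, I set $v=\Delta u$ so that \eqref{P+} rewrites as $(-\Delta+\lambda)v\le -(K_{\alpha,\beta}\ast u^p)u^q\le 0$, i.e.\ $v$ is a subsolution of $(-\Delta+\lambda)w=0$. The pointwise maximum principle rules out $v$ attaining a positive maximum in $\R^N$, since at such a point $-\Delta v\ge 0$ and $\lambda v>0$ contradict the inequality. If $\sup v>0$ is finite but only approached at infinity, a translation-plus-compactness argument based on the interior $C^4$ estimates that elliptic regularity provides for $u$ promotes this to an attained positive maximum, giving the same contradiction. The genuinely delicate case is $v$ unbounded above; my plan is to pass to spherical averages $\overline{u}(r)$, which inherit the fourth-order inequality in radial form, and exploit the identity $r^{N-1}\overline{u}'(r)=-\int_0^r s^{N-1}\overline{\Delta u}(s)\,ds$ to show that $\overline{\Delta u}(r)\to+\infty$ would force $\overline{u}(r)$ to become negative for large $r$, contradicting $u\ge 0$.

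With $-\Delta u\ge 0$ in place, case (i) is essentially free: any non-negative superharmonic function on $\R^N$ with $N\le 2$ is constant, and the right-hand side of \eqref{P+} at a non-zero constant equals $u^{p+q}\int_{\R^N}K_{\alpha,\beta}$, which is infinite under \eqref{ptt}. For cases (ii)-(ix) I assume $u$ non-trivial, so $u>0$ everywhere by the strong minimum principle, and invoke the Harnack inequality of \cite{Tru67} to get $\inf_{B_R}u\ge c_N\,\overline{u}_R$ on concentric balls. Testing \eqref{P+} against a radial cutoff $\phi_R(x)=\eta(|x|/R)^\theta$ with $\eta\in C_c^\infty([0,2))$, $\eta\equiv 1$ on $[0,1]$, and $\theta$ large, and noting that $|\Delta^2\phi_R|\lesssim R^{-4}$ is subdominant to $|\Delta\phi_R|\lesssim R^{-2}$ for $R\to\infty$, gives
\[
\int_{\R^N}(K_{\alpha,\beta}\ast u^p)u^q\,\phi_R\,dx\ \le\ C\,R^{-2}\int_{B_{2R}\setminus B_R}u\,\phi_R^{(\theta-2)/\theta}\,dx.
\]
Restricting the double integral on the left to a subregion where $|x-y|\sim R$ (so that $K_{\alpha,\beta}(x-y)\gtrsim R^{-\alpha}\log^{\beta}(1+R)$ with the sign of $\beta$ selecting the appropriate asymptotics from \eqref{ptt}) and closing with H\"older and the Harnack lower bound converts this into an inequality of the form $R^{a}(\log R)^{b}\le C$ with $a=a(N,p,q,\alpha)$ and $b=b(\beta,p,q)$ linear in the parameters. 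Each of (ii)-(ix) corresponds to a particular way of distributing the Harnack bound between the $u^p$ and $u^q$ factors and produces either $a>0$ (strictly subcritical cases (ii), (iv), (vi)) or $a=0$ with $b>0$ (critical cases (iii), (v), (vii)-(ix)); sending $R\to\infty$ yields the contradiction, with Lemma \ref{lm1} supplying the sharp bookkeeping of the logarithmic factor in the critical cases.

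The main obstacle is expected to be the unbounded-above case of the superharmonicity lemma, because any argument beyond the elementary maximum principle must genuinely use $u\ge 0$ rather than the mere non-negativity of the right-hand side, and the radial-ODE reduction seems the cleanest route. A secondary complication is the case split (ii)-(ix): depending on whether the scaling obstruction is carried by the $u^p$ factor (cases (iii), (viii)), the $u^q$ factor (cases (vi), (vii), (ix)), or the symmetric combination $p+q$ (cases (iv), (v)), the convolution must be bounded below on a different subregion and the Harnack bound redistributed accordingly before H\"older and Lemma \ref{lm1} are applied, which explains why nine distinct sub-cases arise.
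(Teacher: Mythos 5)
Your overall strategy---establish $-\Delta u\ge 0$, dispose of $N\le 2$ via the Liouville theorem for superharmonic functions, and attack $N\ge 3$ by a test-function argument combined with a pointwise lower bound on $u$---is the paper's. But both the superharmonicity step and the critical cases contain genuine gaps. For the first: your three-way split (attained positive max / sup approached at infinity / unbounded above) does not close. In the middle case, a translation-plus-compactness argument requires an a priori bound on $u$ to extract a convergent subsequence, which you do not have; interior elliptic estimates control derivatives in terms of $\|u\|_{L^\infty}$, which is exactly the unknown quantity. The paper avoids the split entirely: set $w=-\Delta u$, suppose $w(0)<0$ after translation, pass to spherical averages $\overline{w}$, and observe that $(-\Delta+\lambda)\overline w\ge 0$ forces $(r^{N-1}\overline{w}')'<0$ on the set where $\overline w<0$, so this set is all of $[0,\infty)$ and $\overline{w}\le\overline{w}(0)<0$. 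Integrating twice gives $\overline{u}(r)\ge \frac{c}{2N}r^2+\overline u(0)$, and Jensen's inequality (for $p\ge1$) then makes $(\K1\ast u^p)(0)$ diverge, contradicting \eqref{KKK}. Your ``radial ODE'' idea should be the whole argument, not the last resort.

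For cases (ii)--(ix): after $-\Delta u\ge 0$, the usable quantitative lower bound is $u(x)\ge c|x|^{2-N}$ for $|x|>1$ (Lemma~\ref{lm2}(ii), a superharmonic comparison fact), not a Harnack bound of the form $\inf_{B_R}u\ge c\,\overline u_R$, which carries no rate. Feeding $u\ge c|x|^{2-N}$ into your test-function inequality handles the strictly subcritical cases (ii), (iv), (vi), and the critical cases when $\beta>0$, but \emph{not} when $\beta\le0$; yet the theorem asserts nonexistence in, e.g., case (v) for all $\beta>\frac{1}{p+q}-1$, which includes negative $\beta$. The paper closes this gap by a bootstrap that you do not mention: feed the resulting lower bound on the right-hand side back into \eqref{P+} to obtain $-\Delta u\ge C|x|^{-N}\log^{\beta}|x|$ near infinity, then compare $u$ with the barrier $w(x)=|x|^{2-N}\log^{\theta}|x|$, with $-\beta/(p+q-1)<\theta<1+\beta$, to improve the lower bound to $u\ge c|x|^{2-N}\log^{\theta}|x|$. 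Only with this extra $\log^{\theta}$ does the test-function estimate produce a divergent $\log^{b}R$ with $b>0$. You attribute the ``sharp bookkeeping of the logarithmic factor'' to Lemma~\ref{lm1}, but Lemma~\ref{lm1} alone does not supply this improvement; the barrier comparison is a separate, essential step. Moreover, cases (viii)--(ix) also rely on the second half of Lemma~\ref{lm1}(i) (which requires $f\ge c|x|^{-N}$ and $\beta\le 0$) to gain an extra $\log(1+|x|)$ factor in $(\K1\ast u^p)$, and your sketch omits this as well.
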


We next turn to existence results of non-negative solutions to \eqref{P+}. Under the assumption of $N \geq 3$ and $\alpha, \beta$ satisfy \eqref{ptt}, we provide essentially optimal conditions in terms of $p, q, \alpha, \beta$ and $\lambda$ for the existence of a non-negative solution. Precisely, we have:
\begin{thm}\label{thm3} {\rm (Existence for \eqref{P+})}\\
Assume $0 \leq \alpha < N$, $\beta > \alpha - N$, $p > 0$ and $q > 0$. Then, there exists $\lambda > 0$ so that \eqref{P+} has non-negative solutions in $\R^N$ if one of following conditions hold: 
\begin{enumerate}[\rm(i)]
    \item $p > \frac{N - \alpha}{N - 2}$,\quad$q > \frac{N - \alpha}{N - 2}$,\quad$p + q > \frac{2N - \alpha}{N - 2}\quad$and\quad$\beta > \alpha - N;$
    \item $p = \frac{N - \alpha}{N - 2}$,\quad$q > \frac{N}{N - 2}\quad$and\quad$\beta < -1;$
    \item $p > \frac{N}{N - 2}$,\quad$q = \frac{N - \alpha}{N - 2}\quad$and\quad$\beta < -1$;
    \item $p > \frac{N - \alpha}{N - 2}$,\quad$q > \frac{N - \alpha}{N - 2}$,\quad$p + q = \frac{2N - \alpha}{N - 2}\quad$and\quad$\beta < -1$;
    \item $p = \frac{N - \alpha}{N - 2}$,\quad$q = \frac{N}{N - 2}\quad$and\quad$\beta < -2$;
    \item $p = \frac{N}{N - 2}$,\quad$q = \frac{N - \alpha}{N - 2}\quad$and\quad$\beta < -2.$
\end{enumerate}
\end{thm}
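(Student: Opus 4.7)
The plan is to construct, in each subcase, an explicit classical non-negative solution of \eqref{P+} using the radial ansatz
$$ u_A(x) = A\,(1+|x|^2)^{-s/2}\,\bigl[\log(e+|x|^2)\bigr]^\gamma,\qquad x\in\R^N, $$
with small amplitude $A>0$ and exponents $s\in(0,N-2]$, $\gamma\geq 0$ adapted to the case. A direct radial computation shows $\Delta u_A\leq 0$ in $\R^N$ (at least for $\gamma$ small), so the LHS $\Delta^2 u_A-\lambda\Delta u_A$ is nonnegative and, at infinity, dominated by the first-order term $-\lambda\Delta u_A$ (the biharmonic term $\Delta^2 u_A$ is two decay orders smaller). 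Since LHS scales linearly in $A$ while the RHS scales as $A^{p+q}$ and $p+q>1$ in every subcase, the inequality will hold as soon as the asymptotic decay rates of LHS and RHS match up properly and $A$ is taken small enough.

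For case (i), I would set $\gamma=0$ and pick $s\in(0,N-2)$ satisfying (a) $sp>N-\alpha$ (so the convolution is finite at each point) and (b) the decay rate of $-\Delta u_A$ strictly beats the decay rate of the RHS. Splitting $\R^N$ into the three regions $\{|y|<|x|/2\}$, $\{|y|>2|x|\}$ and the intermediate annulus $\{|x|/2\leq|y|\leq 2|x|\}$ produces
$$ (\K1*u_A^p)(x)\leq CA^p|x|^{-\alpha}(\log|x|)^\beta\quad\text{if }sp>N, $$
and
$$ (\K1*u_A^p)(x)\leq CA^p|x|^{N-sp-\alpha}(\log|x|)^\beta\quad\text{if }N-\alpha<sp\leq N. $$
In either regime, the hypotheses $p,q>(N-\alpha)/(N-2)$ and $p+q>(2N-\alpha)/(N-2)$ together open an admissible window for $s$.

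Cases (ii)--(vi) are borderline: a critical exponent ($p$, $q$, or $p+q$) forces $s=N-2$, which makes $u_A(x)$ decay at the harmonic rate $|x|^{-(N-2)}$, and the naive choice $\gamma=0$ fails because the polynomial decay rates of LHS and RHS coincide. The remedy is to switch on the log factor. The harmonicity of $|x|^{-(N-2)}$ makes the leading term of $\Delta u_A$ cancel, producing
$$ -\Delta u_A(x)\sim c_\gamma A\,|x|^{-N}(\log|x|)^{\gamma-1}\quad\text{as }|x|\to\infty, $$
a factor of $(\log|x|)^\gamma$ larger than the $\gamma=0$ rate. A matching computation for the convolution --- via the same three-region split and using Lemma \ref{lm1} to control integrals of the form $\int r^{-1}(\log r)^b\,dr$ --- shows that an admissible $\gamma$-interval of the form $(0,\,-(1+\beta)/R)$ with $R$ equal to $p$, $q-1$, or $p+q-1$ exists precisely when $\beta<-1$ in cases (ii)--(iv), and collapses further, forcing $\beta<-2$, in the two-critical subcases (v)--(vi).

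The main technical obstacle is establishing the sharp convolution asymptotic in the borderline cases: one has to track the logarithmic factors arising from $\K1$ and from $u_A^p$, whose dominant contribution typically comes from the outer region $\{|y|>2|x|\}$ where a logarithm is generated by integration. Lemma \ref{lm1} is decisive precisely here. Once the asymptotics are in place, the remaining verifications --- positivity of $-\Delta u_A$ globally in $\R^N$ (a radial ODE check, valid for $\gamma$ sufficiently small), the strict exponent comparison at infinity, and the pointwise inequality near $x=0$ and on bounded sets (handled by the scaling $A\gg A^{p+q}$ with $\lambda$ suitably chosen) --- are routine.
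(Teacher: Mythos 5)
Your approach is genuinely different from the paper's, and it does work, but it shifts the burden of proof to a place the paper deliberately avoids. The paper's construction is $u = \Phi * (w^{-\gamma}\log^{\tau}w)$ with $w(x)=(A+|x|^2)^{1/2}$ and $\Phi$ the fundamental solution, so that $-\Delta u = w^{-\gamma}\log^{\tau}w$ \emph{by definition}: superharmonicity is automatic, and computing $\Delta^2 u - \lambda\Delta u = -\Delta(-\Delta u) + \lambda(-\Delta u)$ only requires differentiating the explicit function $w^{-\gamma}\log^\tau w$, which Lemma \ref{lml} handles cleanly and shows is nonnegative for $\lambda$ large. Your direct ansatz $u_A(x)=A(1+|x|^2)^{-s/2}[\log(e+|x|^2)]^\gamma$ matches this asymptotically ($s=\gamma-2$, $\gamma_{\text{yours}}=\tau$ when $\gamma_{\text{paper}}<N$, or $s=N-2$, $\gamma_{\text{yours}}=1+\tau$ when $\gamma_{\text{paper}}=N$) but you must verify $-\Delta u_A\geq 0$ and the nonnegativity of the left-hand side by hand. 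This is delicate precisely in the borderline cases: with $s=N-2$ the leading term of $\Delta u_A$ cancels, and the sign of $\Delta u_A$ is controlled by a competition between a term $\sim -\gamma(N-2)w^{-N}L^{\gamma-1}$ coming from the cross-derivative $\nabla(w^{-(N-2)})\cdot\nabla(L^\gamma)$ plus $\Delta(L^\gamma)$, and a term $\sim -N(N-2)w^{-N-2}L^\gamma$ from $L^\gamma\Delta(w^{-(N-2)})$. A careful expansion (which you assert but don't carry out) does show $\Delta u_A<0$ everywhere for $\gamma$ small enough and $\Delta^2 u_A/(-\Delta u_A)$ bounded, but this is the heart of the matter in the construction and deserves to be written explicitly. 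A second minor difference: you take $A\to 0$ (small amplitude, exploiting $p+q>1$), while the paper fixes the amplitude and takes $\lambda$ large; since the theorem only asserts existence for some $\lambda>0$ both are fine. Your derivation of the admissible $\gamma$-windows ($\gamma\in(0,-(1+\beta)/p)$ in case (ii), $(0,-(1+\beta)/(q-1))$ in (iii), $(0,-(1+\beta)/(p+q-1))$ in (iv), and $(0,-(2+\beta)/(p+q-1))$ with the extra convergence constraint in (v)--(vi)) is correct and parallels the paper's choice of $\tau$ close to $-1$.
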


The outcome of Theorem \ref{thm2} and Theorem \ref{thm3} can be extended to the extremal case $\alpha = N$. In this setting, condition \eqref{ptt} yields $\beta>0$. We find that Theorem \ref{thm3}(i) provides the optimal condition for the existence of non-negative solutions to \eqref{P+}.  More exactly, we have
\begin{thm}\label{thm4} Assume $N \geq 3$, $p \geq 1$, $q > 0$, $\alpha = N$ and $\beta > 0$. There exists $\lambda>0$ so that \eqref{P+} has a non-negative solution in $\R^N$ if and only if
$$
p + q > \frac{N}{N - 2}.
$$
\end{thm}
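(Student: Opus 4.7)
The plan is to derive both implications by extending the arguments behind Theorems \ref{thm2} and \ref{thm3} to the borderline value $\alpha = N$. Observe first that substituting $\alpha = N$ into the hypotheses of Theorem \ref{thm3}(i) yields exactly $p>0$, $q>0$, $p+q>\frac{N}{N-2}$, $\beta>0$, while substituting $\alpha=N$ into Theorem \ref{thm2}(iv) produces $p\geq 1$, $p+q<\frac{N}{N-2}$, $\beta>0$ and into Theorem \ref{thm2}(v) produces $p\geq 1$, $p+q=\frac{N}{N-2}$, $\beta>\frac{1}{p+q}-1=-\frac{2}{N}$. The hypothesis $\beta>0$ meets all three constraints, so the two earlier results jointly cover every case of Theorem \ref{thm4} provided their proofs survive the passage $\alpha \to N$.

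For the necessity direction, assume $p+q \leq \frac{N}{N-2}$. I would re-examine the proofs of Theorem \ref{thm2}(iv)--(v) step by step, checking that each intermediate estimate remains meaningful at $\alpha=N$. The two ingredients requiring attention are: (a) the local property $-\Delta u \geq 0$ for any non-negative solution of \eqref{P+}, which is a pointwise statement unaffected by the value of $\alpha$; and (b) the integral estimates provided by Lemma \ref{lm1}, whose validity at $\alpha=N$ relies on $K_{N,\beta}\in L^1_{loc}(\R^N)$, which is guaranteed precisely because $\beta>0$. Combining $-\Delta u\geq 0$ with the Harnack inequality of Trudinger \cite{Tru67}, I would then derive a sharp lower bound for the spherical average of $u$ and integrate against the convolution term to obtain a contradiction as the test radius tends to infinity.

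For the sufficiency direction, assume $p+q > \frac{N}{N-2}$. The strategy is to adapt the explicit supersolution construction underlying Theorem \ref{thm3}(i). A typical ansatz is a smooth positive radial function behaving like $|x|^{-\sigma}$ at infinity for a well-chosen exponent $\sigma$ depending on $p,q,N,\alpha,\beta$; at $\alpha=N$ the balance threshold $p+q=\frac{2N-\alpha}{N-2}$ degenerates to $p+q=\frac{N}{N-2}$, and one selects $\sigma$ from a homogeneity relation that is admissible exactly when $p+q$ lies strictly above this threshold. After fixing $\sigma$, I would estimate $K_{N,\beta}\ast u^p$ from above by splitting the convolution into near-, intermediate-, and far-field regions, using the asymptotics $K_{N,\beta}(x)\simeq |x|^{\beta-N}$ near $0$ and $K_{N,\beta}(x)\simeq |x|^{-N}\log^\beta|x|$ at infinity, and finally compare the outcome against $\Delta^2 u - \lambda \Delta u$ for $\lambda$ chosen sufficiently small so that the fourth-order contribution dominates.

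The principal obstacle is the borderline integrability of $K_{N,\beta}$ at $\alpha=N$: the kernel is only locally integrable, and its far-field decay carries the extra logarithmic factor $\log^\beta|x|$ on top of $|x|^{-N}$. Consequently, the convolution $K_{N,\beta}\ast u^p$ picks up logarithmic corrections that must be tracked with precision on both sides of the equivalence. I expect this to be the step where the restriction $\beta>0$ (rather than the generic $\beta>\alpha-N$) enters in an essential way, and where Lemma \ref{lm1} must be applied in its sharpest logarithmic form to recover a clean threshold $p+q=\frac{N}{N-2}$.
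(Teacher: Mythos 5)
Your overall strategy---deducing the non-existence half from Theorem \ref{thm2}(iv)--(v) specialized to $\alpha = N$, $\beta > 0$, and obtaining existence by adapting the explicit supersolution construction behind Theorem \ref{thm3}(i)---matches the paper. Section 6 of the paper carries out exactly the existence half of this plan, splitting into $1\leq p\leq \frac{N}{N-2}$ and $p>\frac{N}{N-2}$ and applying Lemma \ref{lmr2} to the ansatz \eqref{Pu}; the non-existence half is, as you note, already covered by Theorem \ref{thm2}(iv) for $p+q<\frac{N}{N-2}$ and by the $\beta>0$ sub-case of Theorem \ref{thm2}(v) for $p+q=\frac{N}{N-2}$, since $\beta>0>\frac{1}{p+q}-1$ automatically.

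There is, however, a concrete error in your sufficiency sketch: you propose to take ``$\lambda$ sufficiently small so that the fourth-order contribution dominates.'' The paper does the opposite. In Lemma \ref{lml} one takes $\lambda$ \emph{large} so that $\Delta^2 u - \frac{\lambda}{2}\Delta u\geq 0$, and hence $\Delta^2 u - \lambda\Delta u \geq -\frac{\lambda}{2}\Delta u = \frac{\lambda}{2}w^{-\gamma}\log^{\tau}w$; it is the \emph{second-order} part $-\lambda\Delta u$, which decays like $|x|^{-\gamma}$, that must absorb $(\K1 * u^p)u^q$. The fourth-order term decays faster, like $|x|^{-\gamma-2}$, so if you tried to make it the dominant term you would be comparing $(\K1*u^p)u^q\lesssim |x|^{-(\gamma-2)(p+q)}$ (up to logarithms) against $|x|^{-\gamma-2}$, which forces $p+q\geq\frac{\gamma+2}{\gamma-2}\to\frac{N+2}{N-2}$ as $\gamma\to N$. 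That is a strictly stronger requirement than the correct threshold $p+q>\frac{N}{N-2}$, so the ``$\lambda$ small'' route cannot recover the sharp exponent; $\lambda$ must be taken large.
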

Moreover, for $p,\,q\geq 1$, the results in Theorem \ref{thm2} and Theorem \ref{thm3} are nearly optimal. Precisely, we conclude that conditions (i)-(vi) in Theorem \ref{thm3} are optimal for the existence of a non-negative solution if $0 \leq \alpha < N$ and $\beta < -2$ or $\beta > -1 + \frac{1}{q}$. 
\begin{cor}\label{cor}
Assume $N \geq 3$, $p,q \geq 1$. 
\begin{enumerate}[\rm(i)]
\item If $0 \leq \alpha < N$ and $\beta < -2$ then, there exists $\lambda>0$ so that \eqref{P+} has a non-negative solution in $\R^N$ if and only if
$$
p,\, q \geq \frac{N - \alpha}{N - 2} \quad\text{and}\quad p + q \geq \frac{2N - \alpha}{N - 2}.
$$
\item If $0 \leq \alpha < N$ and $\beta > -1 + \frac{1}{q}$ then, there exists $\lambda>0$ so that \eqref{P+} has a non-negative solution in $\R^N$ if and only if
$$
p,\, q >\frac{N - \alpha}{N - 2} \quad\text{and}\quad p + q > \frac{2N - \alpha}{N - 2}.
$$
\end{enumerate}
\end{cor}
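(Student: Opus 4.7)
The plan is to prove Corollary \ref{cor} by a bookkeeping combination of Theorem \ref{thm3} (for sufficiency) and Theorem \ref{thm2} (for necessity), matching each configuration of $(p,q,\alpha,\beta)$ in the corollary's hypotheses to the appropriate subcase. Notice first that in part (i) the hypothesis $\beta<-2$ combined with the standing assumption $\beta>\alpha-N$ from \eqref{ptt} forces $\alpha<N-2$, so the parameter regime is automatically compatible with applying Theorem \ref{thm3}.

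For the sufficiency direction of part (i), the plan is a branching by the strictness of the three threshold conditions. If $p,q>\frac{N-\alpha}{N-2}$ and $p+q>\frac{2N-\alpha}{N-2}$ all hold strictly, then Theorem \ref{thm3}(i) applies directly. If $p=\frac{N-\alpha}{N-2}$, the sum condition forces $q\geq\frac{N}{N-2}$; I would apply Theorem \ref{thm3}(ii) when $q>\frac{N}{N-2}$ and Theorem \ref{thm3}(v) when $q=\frac{N}{N-2}$, both being compatible with $\beta<-2$. The symmetric case $q=\frac{N-\alpha}{N-2}$ uses Theorem \ref{thm3}(iii) and (vi). The remaining case $p,q>\frac{N-\alpha}{N-2}$ with $p+q=\frac{2N-\alpha}{N-2}$ is handled by Theorem \ref{thm3}(iv) since $\beta<-2<-1$. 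For necessity, if one of $p,q$ or $p+q$ falls below the critical threshold, I would invoke Theorem \ref{thm2}(iv) for the sum case, Theorem \ref{thm2}(ii) for $p<\frac{N-\alpha}{N-2}$ (observing that $p\geq 1$ forces $\alpha<2$), and symmetrically Theorem \ref{thm2}(vi) for $q<\frac{N-\alpha}{N-2}$.

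Part (ii) follows the same template with strict inequalities throughout. Sufficiency when $p,q>\frac{N-\alpha}{N-2}$ and $p+q>\frac{2N-\alpha}{N-2}$ again comes from Theorem \ref{thm3}(i). The subtlety lies in necessity at the equality subcases: if $p+q=\frac{2N-\alpha}{N-2}$, the elementary bound $\frac{1}{q}\geq\frac{1}{p+q}$ (valid since $p>0$) gives
\[
\beta> -1+\frac{1}{q}\geq \frac{1}{p+q}-1,
\]
which is exactly the hypothesis needed to apply Theorem \ref{thm2}(v). The boundary cases $p=\frac{N-\alpha}{N-2}$ and $q=\frac{N-\alpha}{N-2}$ (which by $p,q\geq 1$ again restrict to $\alpha\leq 2$) are handled respectively by Theorem \ref{thm2}(iii) (since $\beta>-1+\frac{1}{q}>-1$) and Theorem \ref{thm2}(vii) (whose threshold $\beta>\frac{1}{q}-1$ coincides with the hypothesis).

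The main obstacle I foresee is purely organizational: laying out the case tree so that each combination of strict/non-strict threshold relations is hit by exactly one subcase of Theorem \ref{thm2} or Theorem \ref{thm3}, with the compatibility of side conditions (especially the interplay between $\alpha<2$, $\alpha<N-2$, and the $\beta$-restrictions) verified at each branch. No new analytic input should be required beyond the two master theorems.
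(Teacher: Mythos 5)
Your plan is essentially the proof the paper has in mind: Corollary \ref{cor} is meant to be read off from Theorems \ref{thm2} and \ref{thm3}, and the work is pure bookkeeping. Your case decomposition for part (i) is complete and correct --- in particular the observations that $p\geq 1$ together with $p<\frac{N-\alpha}{N-2}$ forces $\alpha<2$ (so Theorem \ref{thm2}(ii) is applicable), that $\beta<-2<-1$ puts you inside all of Theorem \ref{thm3}(ii)--(vi), and that $p=\frac{N-\alpha}{N-2}$ with the sum constraint forces $q\geq\frac{N}{N-2}$, are exactly the points one has to verify. Likewise the chain $\beta>-1+\frac1q\geq -1+\frac{1}{p+q}$ in part (ii) is the right way to reach Theorem \ref{thm2}(v) on the line $p+q=\frac{2N-\alpha}{N-2}$.

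There is, however, one boundary case in part (ii) that your argument (and, as far as I can see, the Corollary as stated) does not actually reach. When $q=\frac{N-\alpha}{N-2}$ and $q\geq 1$ you correctly deduce $\alpha\leq 2$, but Theorem \ref{thm2}(vii) is stated under the \emph{strict} hypothesis $0\leq\alpha<2$: its proof (the test-function argument in Section 4, parts (vi)--(vii)) is carried out under the standing assumption $q>1$, and $q=\frac{N-\alpha}{N-2}>1$ is equivalent to $\alpha<2$. At the endpoint $\alpha=2$, i.e.\ $q=1=\frac{N-\alpha}{N-2}$, with $p>\frac{N}{N-2}$ and $\beta>0$, none of the items of Theorem \ref{thm2} applies: (ii) and (vi) are vacuous since $\frac{N-\alpha}{N-2}=1\leq p,q$; (iii), (viii), (ix) each require an equality on $p$ that fails because $p>\frac{N}{N-2}$; (iv), (v) fail because $p+q>\frac{2N-2}{N-2}$; and (vii) fails because $\alpha\not<2$. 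So non-existence in this configuration is not supplied by the master theorems, and your parenthetical ``$\alpha\leq 2$'' glosses over exactly the point where Theorem \ref{thm2}(vii) does not reach. This is not a defect you introduced --- the paper itself has the asymmetry between Theorem \ref{thm2}(iii), which gets $\alpha\leq 2$ via Lemma \ref{lm2}, and (vii), which needs $\alpha<2$ --- but a careful write-up of the Corollary should either restrict $\alpha<2$ when $q=\frac{N-\alpha}{N-2}$ (equivalently assume $q>1$) or supply a separate non-existence argument for the $\alpha=2$, $q=1$, $p>\frac{N}{N-2}$, $\beta>0$ sub-case.
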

Finally, we leave open the following cases (see Table 1 below) which essentially involve a small interval that contains the exponent $\beta > \alpha - N$.
\begin{table}[h]
\centering
\caption*{Open cases for \eqref{P+}}\vspace{-0.2cm}
\begin{tabular}{cccc}
\toprule\toprule
\vspace{0.1cm}$p$ & $q$ & $p + q$ & $\beta$ \\ 
\midrule
\vspace{0.15cm} $p > \max\left\{1, \frac{N - \alpha}{N - 2}\right\}$ \quad & \quad $q > \frac{N - \alpha}{N - 2}$ \quad & \quad $p + q = \frac{2N - \alpha}{N - 2}$ \quad & \quad $-1\leq \beta\leq -1+\frac{1}{p+q} $\vspace{0.15cm}\\ 
\vspace{0.15cm} $1\leq p = \frac{N - \alpha}{N - 2}$ \quad & \quad $q = \frac{N}{N - 2}$ \quad & \quad $p + q = \frac{2N - \alpha}{N - 2}$ \quad & \quad $-2\leq \beta \leq  -2 + \frac{1}{q}$  \vspace{0.1cm}\\ 
\vspace{0.15cm} $p = \frac{N}{N - 2}$ \quad & \quad $q = \frac{N-\alpha}{N - 2}$ \quad & \quad $p + q = \frac{2N - \alpha}{N - 2}$ \quad & \quad $-2\leq \beta \leq  -2 + \frac{1}{q}$  \vspace{0.1cm}\\ 
\vspace{0.1cm} $p > \frac{N}{N - 2}$ \quad & \quad  $1< q = \frac{N-\alpha}{N - 2}$ \quad & \quad $p + q > \frac{2N - \alpha}{N - 2}$ \quad & \quad $-1\leq \beta \leq  -1+\frac{1}{q}$  \vspace{0.1cm}\\ 
\vspace{0.1cm} $p > \frac{N}{N - 2}$ \quad & \quad $0 < q \leq  1$ \quad & \quad $p + q = \frac{2N - \alpha}{N - 2}$ \quad & \quad $\alpha - N < \beta \leq  -1+\frac{1}{p + q}$  \vspace{0.1cm}\\ 
\vspace{0.1cm} $p > \frac{N}{N - 2}$ \quad & \quad $0 < q \leq  1$ \quad & \quad $p + q > \frac{2N - \alpha}{N - 2}$ \quad & \quad $\beta > \alpha - N$ \vspace{0.1cm}\\ 
\bottomrule\bottomrule
\end{tabular}
\bigskip
\caption{The range of exponents not covered in Theorem \ref{thm2} and Theorem \ref{thm3}. }
\end{table}
\vspace{-0.3cm}

The remainder of this paper is organized as follows. Section 2 contains some preliminary results, including various integral estimates involving the potential $\K1$. Section 3 and Section 4 provide the proofs of Theorem \ref{thm1} and Theorem \ref{thm2} respectively. Section 5 and Section 6 are devoted to the proofs of Theorem \ref{thm3} and \ref{thm4}. 
The summary of exponents $p,q,\alpha,\beta$ for which Theorem \ref{thm2}, Theorem \ref{thm3} and Theorem \ref{thm4} yield the existence, non-existence and open cases for a non-negative solutions of \eqref{P+}, assuming a proper $\lambda > 0$ large, is presented in the Table 2 below. 
\begin{landscape}
\begin{table}[ht]
\caption*{Existence \& Non-existence for \eqref{P+}}\vspace{-0.2cm}
\begin{tabular}{c|c|c|c|c|cc}
\hline\hline
\boldmath ${p \geq 1}$ & \boldmath$q > 0$ & \boldmath$p + q$ & \boldmath$\alpha$ & \boldmath$\beta > \alpha - N$ & {\bf Exist/Non-exist} & {\bf Result}\\ \hline\hline
$p < \frac{N - \alpha}{N - 2}$ & $q > 0$  & $p + q > 0$ & $\alpha \in [0, 2]$ & $\beta > \alpha - N$ & Non-exist & Theorem \ref{thm2}(ii)\\ \hline
\multirow{6}{*}{$p = \frac{N - \alpha}{N - 2}$} & $0 < q < \frac{N}{N - 2}$ & $p + q < \frac{2N - \alpha}{N - 2}$ & $\alpha \in [0, 2]$ & $\beta > \alpha - N$ & Non-exist & Theorem \ref{thm2}(iv) \\ \cline{2-7} 
&\multirow{3}{*}{$q = \frac{N}{N - 2}$} & \multirow{3}{*}{$p + q = \frac{2N - \alpha}{N - 2}$} & \multirow{3}{*}{$\alpha \in [0, 2]$} & $\beta < -2$ & Exist & Theorem \ref{thm3}(v) \\ \cline{5-7} 
&&&& $\beta > -2 + \frac{1}{q}$ & Non-exist & Theorem \ref{thm2}(iii)\&(ix)  \\ \cline{5-7} 
&&&& $-2 \leq \beta \leq -2 + \frac{1}{q}$ & ? & ? \\ \cline{2-7} 
& \multirow{2}{*}{$q > \frac{N}{N - 2}$} &  \multirow{2}{*}{$p + q > \frac{2N - \alpha}{N - 2}$} & \multirow{2}{*}{$\alpha \in [0, 2]$} & $\beta \geq -1$ & Non-exist & Theorem \ref{thm2}(iii) \\ \cline{5-7} 
&& & & $\beta < -1$ & Exist & Theorem \ref{thm3}(ii) \\ \hline
\multirow{5}{*}{$\frac{N - \alpha}{N - 2} < p < \frac{N}{N - 2}$} & $0 < q \leq \frac{N - \alpha}{N - 2}$ & $p + q < \frac{2N - \alpha}{N - 2}$ & $\alpha \in [0, 2]$ & $\beta > \alpha - N$ & Non-exist & Theorem \ref{thm2}(iv) \\ \cline{2-7}
& \multirow{4}{*}{$q > \frac{N - \alpha}{N - 2}$} & \multirow{3}{*}{$p + q = \frac{2N - \alpha}{N - 2}$} & \multirow{3}{*}{$\alpha \in [0, 2]$} & $\beta > -1 + \frac{1}{p + q}$ & Non-exist & Theorem \ref{thm2}(v) \\ \cline{5-7}
& & & & $-1 < \beta \leq -1 + \frac{1}{p + q}$ & ? & ? \\ \cline{5-7}
& & & & $\beta < -1$ & Exist & Theorem \ref{thm3}(iv) \\ \cline{3-7}
& & $p + q > \frac{2N - \alpha}{N - 2}$ & $\alpha \in [0, 2]$ & $\beta > \alpha - N$ & Exist & Theorem \ref{thm3}(i) \\ \hline
\multirow{5}{*}{$p = \frac{N}{N - 2}$} & $0 < q < \frac{N - \alpha}{N - 2}$ & $p + q < \frac{2N - \alpha}{N - 2}$ & $\alpha \in [0, 2]$ & $\beta > \alpha - N$ & Non-exist & Theorem \ref{thm2}(iv) \\ \cline{2-7}
& \multirow{3}{*}{$q = \frac{N - \alpha}{N - 2}$} & \multirow{3}{*}{$p + q = \frac{2N - \alpha}{N - 2}$} & \multirow{2}{*}{$\alpha \in [0, 2]$} &  $\beta > -2 + \frac{1}{q}$ & Non-exist & Theorem \ref{thm2}(viii) \\ \cline{5-7}
& & & &  $-2 \leq \beta \leq -2 + \frac{1}{q}$ & ? & ? \quad \\ \cline{4-7}
& & & $\alpha \in [0, N)$ & $\beta < -2$ & Exist & Theorem \ref{thm3}(vi)  \\ \cline{2-7}
& $q > \frac{N - \alpha}{N - 2}$ & $p + q > \frac{2N - \alpha}{N - 2}$ & $\alpha \in [0, N]$ & $\beta > \alpha - N$ & Exist & Theorem \ref{thm3}(i)  \\ \hline
\multirow{9}{*}{$p > \frac{N}{N - 2}$} & $0 < q < \frac{N - \alpha}{N - 2}$ & $p + q < \frac{2N - \alpha}{N - 2}$ & $\alpha \in [0, N)$ & $\beta > \alpha - N$ & Non-exist & Theorem \ref{thm2}(iv) \\ \cline{2-7}
& \multirow{3}{*}{$0 < q \leq 1$} & $p + q = \frac{2N - \alpha}{N - 2}$ & $\alpha \in [0, N)$ & $\beta > -1 + \frac{1}{p + q}$ & Non-exist & Theorem \ref{thm2}(v) \\ \cline{3-7}
& & $p + q = \frac{2N - \alpha}{N - 2}$ & $\alpha \in [0, N)$ & $\alpha - N < \beta \leq -1 + \frac{1}{p + q}$ & ? & ? \\ \cline{3-7}
& & $p + q > \frac{2N - \alpha}{N - 2}$ & $\alpha \in [0, N)$ & $\beta > \alpha - N$ & ? & ? \\ 
\cline{2-7}
& $1 < q < \frac{N - \alpha}{N - 2}$ &  $p + q >0$ &  $\alpha \in [0, 2)$ & $\beta > \alpha - N$ & Non-exist & Theorem \ref{thm2}(vi) \\ \cline{2-7}
& \multirow{3}{*}{$q = \frac{N - \alpha}{N - 2}$} & \multirow{3}{*}{$p + q > \frac{2N - \alpha}{N - 2}$} & \multirow{3}{*}{$\alpha \in [0, 2)$} & $\beta > -1 + \frac{1}{q}$ & Non-exist & Theorem \ref{thm2}(vii) \\ \cline{5-7}
& & & & $-1 \leq \beta \leq -1 + \frac{1}{q}$ & ? & ? \\ \cline{5-7}
& & & & $\beta < -1$ & Exist & Theorem \ref{thm3}(iii) \\ \cline{2-7}
& $q > \frac{N - \alpha}{N - 2}$ & $p + q > \frac{2N - \alpha}{N - 2}$ & $\alpha \in [0, N]$ & $\beta > \alpha - N$ & Exist & Theorem \ref{thm3}(i)\&\ref{thm4}\\ 
\hline\hline
\end{tabular}
\bigskip
\caption{The range of exponents for the existence, non-existence and open cases of a non-negative solution in Theorem \ref{thm2}, \ref{thm3} and \ref{thm4}. }
\end{table}
\end{landscape}
\section{Preliminary Results}
In this section we collect some auxiliary results for proving Theorem \ref{thm2}, Theorem \ref{thm3} and Corollary \ref{cor}. Our first result in this sense extends the integral estimates obtained in \cite{CZ23, G23,  GKS20, GKS21, MGZ23, WZ23}.
\begin{lemma}\label{lm1} Let $f \in L^1_{loc}(\R^N)$, $f \geq 0$ and let $\K1$ be given by \eqref{ptt}. 
\begin{enumerate}[\rm(i)]
\item There exists a constant $ C > 0$ such that
\begin{equation}\label{l21a}
(\K1 * f)(x) \geq C\abs{x}^{-\alpha}\log^{\beta}(1 + \abs{x}) \quad \text{in } \R^N \setminus B_1.
\end{equation}
Furthermore, if $f(x) \geq c\abs{x}^{-N}$ in $\R^N \setminus B_1$ for some $c >0$ and $\beta \leq 0$, then, there exists a constant $C > 0$ such that
\begin{equation}\label{l21b}
(\K1 * f)(x) \geq C|x|^{-\alpha}\log^{1 + \beta}(1 + \abs{x})\quad \text{in } \R^N \setminus B_1.
\end{equation}
\item Suppose 
$$f(x) \geq c\abs{x}^{-\sigma}\log^{\kappa}(1 + \abs{x})\quad \text{in }\R^N \setminus B_1\quad\text{for some } c >0, \kappa \in \R \text{ and } \sigma \geq 0.$$
Then, for all $x \in \R^N \setminus B_1$ one has:
\[
\begin{split}
(\K1 * f)(x) = \infty \quad &\text{if }\;\;N - \alpha - \sigma > 0,\\
&\text{or }\;N - \alpha - \sigma = 0,\quad1 + \beta + \kappa \geq 0.
\end{split}
\]
Furthermore, there exists a constant $C > 0$, such that for all $x \in \R^N \setminus B_1$ one has:
\[
(\K1 * f)(x) \geq C\left\{
\begin{aligned}
&\abs{x}^{N - \alpha - \sigma}\log^{\beta + \kappa}(1 + \abs{x}) &\text{if }  N - \alpha - \sigma < 0,&\\[0.1in]
&\log^{1 + \beta + \kappa}(1 + \abs{x}) &\text{if }N - \alpha - \sigma = 0,& \quad 1 + \beta + \kappa < 0.
\end{aligned}\right.
\]
\item Suppose 
$$f(x) \leq c(A + \abs{x})^{-\sigma}\log^{\kappa}(A + \abs{x})\quad \text{in }\R^N,$$
for some $c > 0$, $\sigma \geq N - \alpha$, $A > 1$ and $\kappa \in \R$.
Then, there exists $C =C(N, \alpha, \beta, \sigma, \kappa, A) > 0$ such that for all $x\in \R^N$ one has following estimates:\\[0.2cm]
If $\alpha < N$, then
$$ 
(\K1 * f)(x)\leq C\begin{cases}
\log^{1 + \beta + \kappa}(A + |x|)&\text{if }\sigma=N-\alpha,\, 1 + \beta + \kappa < 0,\\[0.2cm]
(A + \abs{x})^{N - \alpha - \sigma}\log^{\beta + \kappa}(A + \abs{x})&\text{if } N - \alpha < \sigma < N,\\[0.2cm]
(A + \abs{x})^{-\alpha}\log^{\beta}(A + \abs{x})&\text{if }\sigma > N\text{ or }\sigma = N,\, \kappa < -1, \\[0.2cm]
(A + \abs{x})^{-\alpha}\log^{1 + \beta + \kappa}(A + \abs{x})&\text{if }\sigma = N,\, \kappa > -1,\\[0.2cm]
(A + |x|)^{-\alpha}\log^{\beta}(A + |x|)\log(\log(e + |x|))&\text{if }\sigma = N,\, \kappa = -1. \\[0.2cm]
\end{cases} 
$$
If $\alpha = N$, then
$$
(\K1 * f)(x) \leq C\begin{cases}
(A + |x|)^{-\sigma}\log^{1 + \beta + \kappa}(A + \abs{x})&\text{if }0 < \sigma < N,\\[0.2cm]
(A + |x|)^{-N}\log^{1 + \beta + \kappa}(A + \abs{x})&\text{if }\sigma = N,\,\kappa > -1\\[0.2cm]
(A + |x|)^{-N}\log^{\beta}(A + |x|)&\text{if }\sigma > N \text{ or } \sigma = N,\,\kappa < -1, \\[0.2cm]
(A + |x|)^{-N}\log^{\beta}(A + |x|)\log(\log(e + |x|))&\text{if }\sigma = N,\, \kappa = -1. \\[0.2cm]
\end{cases}  
$$
\end{enumerate}
\end{lemma}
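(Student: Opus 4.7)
The plan is to prove all three parts by restricting or decomposing the integration domain in the convolution so that both $K_{\alpha,\beta}(x-y)$ and the given pointwise bound on $f$ can be compared to fixed powers of $|x|$ and $\log(1+|x|)$. The lower bounds in (i) and (ii) will be obtained by keeping only one judiciously chosen sub-region, while the upper bounds in (iii) will require summing contributions from several sub-regions and matching dominant terms.

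For (i), the first estimate is obtained by integrating only over a fixed ball on which $\int f>0$: for $y$ in this ball and $|x|\geq 1$ one has $|x-y|\simeq|x|$ and $\log(1+|x-y|)\simeq\log(1+|x|)$, so $K_{\alpha,\beta}(x-y)$ itself is of the claimed order. For the improved estimate under $f(y)\geq c|y|^{-N}$ and $\beta\leq 0$, I would restrict instead to the annulus $A_x=\{y:2\leq|y|\leq|x|/2\}$, where $|x-y|\simeq|x|$ so that $K_{\alpha,\beta}(x-y)\gtrsim|x|^{-\alpha}\log^\beta(1+|x|)$, while
$$
\int_{A_x}|y|^{-N}\,dy\simeq\log(1+|x|)
$$
supplies the extra $\log$ factor. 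For (ii) I would restrict to $\{|y|\geq 2|x|\}$, where $|x-y|\simeq|y|$ and $\log(1+|x-y|)\simeq\log(1+|y|)$; reduction to radial coordinates then yields
$$
\int_{2|x|}^{\infty}r^{N-1-\alpha-\sigma}\log^{\beta+\kappa}(1+r)\,dr,
$$
which diverges exactly when $N-\alpha-\sigma>0$, or $N-\alpha-\sigma=0$ and $1+\beta+\kappa\geq 0$. In the convergent non-borderline case the standard tail estimate $\int_R^\infty r^{-\gamma-1}\log^m(1+r)\,dr\simeq R^{-\gamma}\log^m R/\gamma$ produces $|x|^{N-\alpha-\sigma}\log^{\beta+\kappa}(1+|x|)$, whereas in the borderline case $N=\alpha+\sigma$ with $1+\beta+\kappa<0$ the substitution $s=\log(1+r)$ yields $\log^{1+\beta+\kappa}(1+|x|)$.

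For (iii), I would decompose $\R^N$ into the three standard regions
$$
R_1=\{|x-y|\leq|x|/2\},\qquad R_2=\{|y|\leq|x|/2\},\qquad R_3=\R^N\setminus(R_1\cup R_2),
$$
and estimate each piece separately. On $R_1$ one has $|y|\simeq|x|$, so $f(y)\lesssim(A+|x|)^{-\sigma}\log^\kappa(A+|x|)$ can be pulled out and the remaining integral $\int_{|z|\leq|x|/2}|z|^{-\alpha}\log^\beta(1+|z|)\,dz$ is computed by radial integration; this is where the factor $\log^{1+\beta}(1+|x|)$ in the $\alpha=N$ row originates, via $\int_0^{|x|/2}r^{-1}\log^\beta(1+r)\,dr\simeq\log^{1+\beta}(1+|x|)$. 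On $R_2$, $|x-y|\simeq|x|$ allows one to freeze the kernel at $|x|^{-\alpha}\log^\beta(1+|x|)$, and the residual $\int_{|y|\leq|x|/2}(A+|y|)^{-\sigma}\log^\kappa(A+|y|)\,dy$ generates all the subcases $N-\alpha<\sigma<N$, $\sigma=N$ with $\kappa\gtrless -1$, and $\sigma>N$, including the iterated-logarithmic contribution at $\kappa=-1$. On $R_3$ one splits further into $\{|x|/2\leq|y|\leq 2|x|\}$, where both $|y|$ and $|x-y|$ are comparable to $|x|$, and $\{|y|\geq 2|x|\}$, where $|x-y|\simeq|y|$ and the radial tail estimate of (ii) applies. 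Summing the three pieces and keeping the dominant term in each parameter regime reproduces the table of bounds for $\alpha<N$ and $\alpha=N$.

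The main technical obstacle is the bookkeeping in (iii) across the many borderline cases, in particular the iterated-logarithmic correction at $\sigma=N$, $\kappa=-1$, which arises from $\int^R r^{-1}\log^{-1}(1+r)\,dr\simeq\log\log(e+R)$, and the critical case $\sigma=N-\alpha$ where all three regions $R_1,R_2,R_3$ contribute terms of the same order. Confirming that each claimed right-hand side is in fact the dominant contribution (rather than merely an upper bound absorbed by another region) requires comparing the logarithmic and iterated-logarithmic exponents produced on the three pieces, and this comparison—together with keeping track of the shift $A$ to ensure integrability at the origin when $\sigma\geq N$—is the most delicate part of the argument.
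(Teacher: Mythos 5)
Your proposal is correct and follows essentially the same strategy as the paper's proof: for (i) integrate over a fixed ball near the origin and then over an intermediate annulus; for (ii) restrict to $\{|y|\geq 2|x|\}$ and reduce to a radial tail integral; for (iii) decompose $\R^N$ into a neighbourhood of $x$, a neighbourhood of the origin, and the remainder, pulling out whichever factor is nearly constant on each piece. The paper's decomposition is organized as three annuli in $|y|$ rather than your ball-around-$x$/ball-around-origin/rest split, but this is cosmetic, and the paper additionally treats $|x|\leq 1$ as a separate case to obtain bounds uniformly on all of $\R^N$, a point your outline leaves implicit.
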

\begin{proof}
(i) Let $x \in \R^N\setminus B_1$. Then, for all $y \in B_{\frac{1}{2}}$ one has 
$$
\frac{|x|}{2} \leq |x| - |y| \leq |x - y| \leq |x| + |y| \leq \frac{3}{2}|x|.
$$ 
Thus
\[K_{\alpha, \beta}(x - y) \geq c|x|^{-\alpha}\log^{\beta}(1 + |x|)\quad\text{for some } c > 0.\]
It follows that
\begin{align*}
(\K1 * f)(x) &\geq \int_{|y| \leq \frac{1}{2}}K_{\alpha, \beta}(x - y)f(y) dy \nonumber\\
&\geq c|x|^{-\alpha}\log^{\beta}(1 + |x|)\int_{|y| \leq \frac{1}{2}}f(y) dy \nonumber\\
&\geq C|x|^{-\alpha}\log^{\beta}(1 + |x|)\quad \text{for all } x \in \R^N \setminus B_1. 
\end{align*}
Furthermore, let $f(x) \geq c|x|^{-N}$ in $\R^N \setminus B_1$ and $\beta \leq 0$. Then, for all $1 \leq |y| \leq |x|$, we have $|x - y| \leq 2|x|$ and
\[K_{\alpha, \beta}(x - y) \geq c|x|^{-\alpha}\log^{\beta}(1 + |x|)\quad\text{for some } c > 0.\]
Thus, by Coarea formula, we estimate
\begin{align*}
 (\K1 * f)(x) &\geq \int_{|x - y|\leq 2|x|} K_{\alpha, \beta}(x - y) f(y)dy \\
 &\geq C|x|^{-\alpha}\log^{\beta}(1 + |x|)\int_{1 < |y| \leq |x|} |y|^{-N}dy\\
 &= C|x|^{-\alpha}\log^{\beta}(1 + |x|)\int_1^{|x|}t^{-1}dt\\ 
 &\geq C|x|^{-\alpha}\log^{1 + \beta}(1 + |x|)\quad \text{for all } x \in \R^N \setminus B_1. 
\end{align*}
(ii) Let $|y| \geq 2|x| \geq 2$. Then,
\begin{equation}\label{tta}
\frac{|y|}{2} \leq |y| - |x| \leq |x - y| \leq |x| + |y| \leq \frac{3}{2}|y|.
\end{equation}
Thus,
\begin{equation}\label{ttb}
K_{\alpha, \beta}(x - y) \geq c|y|^{-\alpha}\log^{\beta}(1 + |y|)\quad\text{for some } c > 0.
\end{equation}
Using \eqref{ttb} and Coarea formula, for all $|x| > 1$ we estimate
\begin{align*}
   (\K1 * f)(x) &\geq C\int_{|y| \geq 2|x|} |y|^{-\alpha - \sigma}\log^{\beta + \kappa}(1 +|y|)dy\\
   &= C\int_{2|x|}^{\infty}t^{N - \alpha - \sigma - 1}\log^{\beta + \kappa}(1 + t)dt\\
   &\geq \left\{\begin{aligned}
       &\infty \quad &\text{if } \sigma < N - \alpha&,\\
       &\infty \quad &\text{if } \sigma = N - \alpha&,\; 1 + \beta + \kappa \geq 0,\\
       &C|x|^{N - \alpha - \sigma}\log^{\beta + \kappa}(1 + |x|) \quad &\text{if } \sigma > N - \alpha&,\\
       &C\log^{1 + \beta + \kappa}(1 + |x|) \quad &\text{if } \sigma = N - \alpha&,\; 1 + \beta + \kappa < 0.
   \end{aligned}\right.
\end{align*}
(iii) Let $x \in \R^N$. We split the convolution integral as follows:
\begin{equation}\label{lm}
    \begin{split}
      (\K1 * f)(x) &= I_1 + I_2 + I_3\\
      &= \left\{ \int_{|y| \geq 2|x|} + \int_{\frac{|x|}{2} \leq |y| < 2|x|} + \int_{|y| < \frac{|x|}{2}} \right\} \K1(x - y)f(y) dy. 
    \end{split}
\end{equation}
To estimate each component of \eqref{lm}, we divide our analysis into two cases.\\
\textbf{Case 1:} $|x| > 1$. \\
For $|y| \geq 2|x|$, we use \eqref{tta} and we have 
\begin{equation}\label{ssa}
\frac{|y|}{2} \leq |x - y| \leq \frac{3|y|}{2}.
\end{equation} 
Thus,
\begin{equation}\label{ssb}
K_{\alpha, \beta}(x - y) \leq c|y|^{-\alpha}\log^{\beta}(1 + |y|)\quad\text{for some } c > 0.
\end{equation}
Hence,
\begin{equation}\label{I1a}
I_1 \leq C \int_{|y| \geq 2|x|} |y|^{-\alpha-\sigma}\log^{\beta + \kappa}(A + |y|)dy = C \int_{2|x|}^{\infty}t^{N - \alpha - \sigma - 1}\log^{\beta + \kappa}(A + t) dt.
\end{equation}
If $\sigma = N - \alpha$ and $1 + \beta + \kappa < 0$, we use the fact that $t > \frac{A + t}{A + 1}$ whenever $t > 1$. Then, from \eqref{I1a} one has
\begin{align}
    I_1 &\leq C\int_{2|x|}^{\infty}t^{-1}\log^{\beta + \kappa}(A + t) dt \nonumber \\
    &\leq C(A + 1)\int_{2|x|}^{\infty}\frac{\log^{\beta + \kappa}(A + t)}{A + t}dt \nonumber \\
    &\leq C\log^{1 + \beta + \kappa}(A + |x|)\label{I1b}.      
\end{align}
If $\sigma > N - \alpha$, then, from \eqref{I1a} and the inequality
\begin{equation}\label{ax}
|x|\geq \frac{A+|x|}{A+1}\quad \mbox{ for all }|x|>1,
\end{equation}
we obtain
\begin{equation}\label{I1c}
\begin{aligned}
I_1 &\leq  C|x|^{N - \alpha - \sigma}\log^{\beta + \kappa}(A + |x|) \\[0.2cm]
& \leq C\Big( \frac{A+|x|}{A+1}\Big)^{N - \alpha - \sigma}\log^{\beta + \kappa}(A + |x|)\\[0.2cm]
& \leq C(A+|x|)^{N - \alpha - \sigma}\log^{\beta + \kappa}(A + |x|).
\end{aligned}
\end{equation}
Combine \eqref{I1b} and \eqref{I1c} to conclude
\begin{equation}\label{I1}
    I_1 \leq C\left\{
    \begin{aligned}
    &\log^{1 + \beta + \kappa}(A + |x|)\quad&\text{if } N - \alpha - \sigma = 0&, 1 + \beta + \kappa < 0,\\
    &(A + |x|)^{N - \alpha - \sigma}\log^{\beta + \kappa}(A + |x|)\quad&\text{if } N - \alpha - \sigma < 0&.
    \end{aligned}\right.
\end{equation}
To estimate $I_2$, we have $\frac{|x|}{2} \leq |y| < 2|x|$, which implies $|x - y| < 3|x|$. It follows that
\begin{align}\label{I2a}
    I_2 &\leq C(A + |x|)^{-\sigma}\log^{\kappa}(A + |x|) \int_{|x - y| < 3|x|}|x - y|^{-\alpha}\log^{\beta}(1 + |x - y|)dy \nonumber\\[0.2cm]
    &\leq C(A + |x|)^{-\sigma}\log^{\kappa}(A + |x|)\int_0^{3|x|}t^{N - \alpha - 1}\log^{\beta}(1+ t)dt.
\end{align}
If $\alpha < N$, then from \eqref{I2a} one has
\begin{align}\label{I2b}
    I_2 &\leq C(A + |x|)^{-\sigma}\log^{\kappa}(A + |x|)|x|^{N - \alpha}\log^{\beta}(1+ |x|) \nonumber\\
    &\leq C(A + |x|)^{N - \alpha - \sigma}\log^{\beta + \kappa}(A + |x|).
\end{align}
If $\alpha = N$, then \eqref{ptt} yields $\beta > 0$ and from \eqref{I2a} together with $\log(1+t)\leq t$ we deduce
\begin{align}\label{I2}
I_2 &\leq C(A + |x|)^{-\sigma}\log^{\kappa}(A + |x|)\int_0^{3|x|}t^{-1}\log^{\beta}(1 + t)dt \nonumber\\
&\leq C(A + |x|)^{-\sigma}\log^{\kappa}(A + |x|)\left\{\int_0^{1}t^{\beta -1}dt + 2\int_{1}^{3|x|}\frac{\log^{\beta}(1 + t)}{1 + t}dt\right\} \nonumber\\
&\leq C(A + |x|)^{-\sigma}\log^{1 + \beta + \kappa}(A + |x|).
\end{align}
Thus, \eqref{I2b} and \eqref{I2} we have
\begin{equation}\label{cas2}
I_2\leq C
\begin{cases}
(A + |x|)^{N - \alpha - \sigma}\log^{\beta + \kappa}(A + |x|) &\mbox{ if }\alpha<N,\\[0.2cm]
(A + |x|)^{-\sigma}\log^{1 + \beta + \kappa}(A + |x|) &\mbox{ if }\alpha=N.
\end{cases}
\end{equation}
Finally, to estimate $I_3$, for $|y| < \frac{|x|}{2}$, one has $ \frac{|x|}{2} < |x - y| <  \frac{3|x|}{2}$. Thus,
$$
K_{\alpha, \beta}(x - y) \leq c|x|^{-\alpha}\log^{\beta}(1 + |x|)\quad\text{for some } c > 0.
$$
It follows that
\[I_3 \leq C|x|^{-\alpha}\log^{\beta}(1 + |x|) \int_{|y| < \frac{|x|}{2}} (A + |y|)^{-\sigma}\log^{\kappa}(A + |y|)dy. \]
Using \eqref{ax} again, we further estimate:
\begin{align}\label{I3}
    I_3 &\leq C(A + |x|)^{-\alpha}\log^{\beta}(A + |x|)\int_0^{\frac{|x|}{2}}(A + t)^{N - \sigma - 1}\log^{\kappa}(A + t) dt \nonumber\\ 
    &\leq C(A + |x|)^{-\alpha}\log^{\beta}(A + |x|)\left\{1 + \int_{\frac{1}{2}}^{\frac{|x|}{2}}(A + t)^{N - \sigma - 1}\log^{\kappa}(A + t)dt\right\} \nonumber\\
    &\leq C\left\{
  \begin{aligned}
     &(A + |x|)^{N - \alpha - \sigma}\log^{\beta + \kappa}(A + |x|)&\text{if }\sigma < N&,\\
     &(A + |x|)^{-\alpha}\log^{\beta}(A + |x|)&\text{if }\sigma > N&\text{ or }\sigma = N,\, \kappa < -1,\\
     &(A + |x|)^{-\alpha}\log^{1 + \beta + \kappa}(A + |x|)&\text{if }\sigma = N&,\, \kappa > -1,\\
     &(A + |x|)^{-\alpha}\log^{\beta}(A + |x|)\log(\log(e + |x|))&\text{if }\sigma = N&,\, \kappa = -1.\\
  \end{aligned}\right.
\end{align}
\textbf{Case 2: }$|x| \leq 1$.\\
For $I_1$, we see that \eqref{ssa} and \eqref{ssb} still hold and then 
\begin{align*}
I_1 &\leq \int_{|y| \geq 2|x|}|y|^{-\alpha}\log^{\beta}(1 + |y|)(A + |y|)^{-\sigma}\log^{\kappa}(A + |y|)dy \nonumber\\
&\leq C\int_{0}^{\infty}t^{N - \alpha - 1}(A + t)^{-\sigma}\log^{\beta}(1 + t)\log^{\kappa}(A + t)dt \nonumber\\
\end{align*}
Using the fact that $ct \leq \log(1 + t) \leq t$ for all $0 < t \leq 1$, if either 
\begin{itemize}
\item $\sigma>N-\alpha$;
\item or $\sigma=N-\alpha$ and $\beta+\kappa<0$,
\end{itemize} 
we conclude
$$
\begin{aligned}
I_1 & \leq C\int_{0}^{1}t^{N - \alpha + \beta - 1}(A + t)^{- \sigma}\log^{\kappa}(A + t)dt + C\int_1^{\infty}t^{N - \alpha - \sigma - 1}\log^{\beta + \kappa}(1 + t)dt \\[0.2cm]
& \leq C\int_{0}^{1}t^{N - \alpha + \beta - 1} dt + C\int_1^{\infty}t^{N - \alpha - \sigma - 1}\log^{\beta + \kappa}(1 + t)dt\\[0.2cm]
&\leq C.
\end{aligned}
$$
For $I_2$, we observe that the estimate \eqref{I2b} remains valid for the case $\alpha < N$ whenever $|x| \leq 1$. For the case $\alpha = N$, we note that \eqref{ptt} yields $\beta>0$ and we deduce
\begin{equation*}
    \begin{split}
        I_2 &\leq C(A + |x|)^{-\sigma}\log^{\kappa}(A + |x|)\int_0^{3}t^{-1}\log^{\beta}(1 + t)dt\\
        &\leq C(A + |x|)^{-\sigma}\log^{\kappa}(A + |x|)\int_0^{3}t^{\beta-1}dt \leq C.
    \end{split}
\end{equation*}
For $I_3$, from $|y| < \frac{|x|}{2} \leq \frac{1}{2}$ one has $|x - y| < \frac{3}{2}$. It follows that
\begin{equation*}
\begin{split}
     I_3 &\leq \int_{|x - y| < \frac{3}{2}}|x - y|^{-\alpha}\log^{\beta}(1 + |x - y|)(A + |y|)^{-\sigma}\log^{\kappa}(A + |y|)dy\\
     &\leq C\int_{|x - y| < \frac{3}{2}}|x - y|^{-\alpha}\log^{\beta}(1 + |x - y|)dy\\
     &\leq C\int_0^{\frac{3}{2}}t^{N - \alpha - 1}\log^{\beta}(1 + t)dt\\
     &\leq C\int_0^{\frac{3}{2}}t^{N - \alpha + \beta - 1}dt \leq C.
\end{split}
\end{equation*}
Thus, we observe that $I_1$, $I_2$ and $I_3$ are bounded by a positive constant.  Hence, \eqref{I1}, \eqref{cas2} and \eqref{I3} hold for $|x|\leq 1$ as well with a larger constant $C>0$. Therefore, combining \eqref{lm}, \eqref{I1}, \eqref{cas2} and \eqref{I3}, we reach the conclusion for all $x \in \R^N$.
\end{proof}
We finish the presentation of this Section with the following useful result. 
\begin{lemma}\label{lm2}
\begin{enumerate}
\item[{\rm (i)}] There are no positive, non-constant superharmonic functions in $\R^N$, $N=1,2$.
\item[{\rm (ii)}] Any positive superharmonic function $u \in  \mathcal{C}^2(\R^N)$, $N\geq 3$, satisfies
\begin{equation}\label{ssup}
u(x) \geq C|x|^{2 - N}\quad \text{in } \R^N\setminus B_1,
\end{equation}
for some positive constant $C>0$.
\end{enumerate}
\end{lemma}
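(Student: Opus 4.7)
The plan is to treat the three regimes $N=1$, $N=2$, and $N \geq 3$ separately, with part (ii) being the main concrete calculation and part (i) relying on classical one- and two-dimensional phenomena.

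For part (i) with $N = 1$, superharmonicity collapses to $u'' \leq 0$, i.e.\ $u$ is concave on $\R$. I would invoke the elementary fact that a positive concave function on the whole line is constant: at any point where a subgradient $p \neq 0$ exists, the affine majorant $u(x_0) + p(x - x_0)$ dominates $u$, and this affine function tends to $-\infty$ in one direction, violating $u>0$. Hence every subgradient is zero and $u$ is constant.

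For part (i) with $N = 2$, the plan is to argue by comparison with harmonic functions in annuli. Setting $m := \inf_{\overline{B_1}} u > 0$, the harmonic function in the annulus $\{1 \le |x| \le R\}$ matching $m$ on $\partial B_1$ and $0$ on $\partial B_R$ is
\[
h_R(x) = m \cdot \frac{\log R - \log|x|}{\log R},
\]
and the minimum principle applied to $u - h_R$ gives $u \geq h_R$ in the annulus. As $R \to \infty$, $h_R(x) \to m$ pointwise, yielding $u \geq m$ outside $B_1$; iterating on larger balls and combining with the strong minimum principle forces $u$ to be constant. The obstacle here, and the reason this case is genuinely more delicate than $N\geq 3$, is that there is no positive decaying harmonic barrier at infinity, so the comparison is with functions that tend to a positive constant rather than to zero; this is the classical two-dimensional recurrence/Liouville phenomenon, and I would either make the iteration rigorous or invoke the standard potential-theoretic theorem directly.

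For part (ii) with $N \geq 3$, the plan is a direct comparison using the fundamental harmonic $|x|^{2-N}$. Let $m := \min_{\partial B_1} u$, which is strictly positive by hypothesis. On the annulus $A_R = \{1 \leq |x| \leq R\}$ I would define
\[
v_R(x) = m \cdot \frac{|x|^{2-N} - R^{2-N}}{1 - R^{2-N}},
\]
which is harmonic in $A_R$, equals $m$ on $\partial B_1$, and vanishes on $\partial B_R$. Then $u - v_R$ is superharmonic in $A_R$ with non-negative boundary values (using $u \geq m$ on $\partial B_1$ and $u \geq 0 = v_R$ on $\partial B_R$), so the minimum principle gives $u \geq v_R$ throughout $A_R$. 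Letting $R \to \infty$ and using $R^{2-N} \to 0$ (since $N \geq 3$), one obtains $u(x) \geq m\,|x|^{2-N}$ for all $|x| \geq 1$, which is \eqref{ssup} with $C = m$. This step is essentially routine once the barrier is written down; the only thing to be mindful of is that the minimum principle on a bounded annulus (not on all of $\R^N$) is what makes the argument work, which is precisely why the comparison is performed at finite $R$ before passing to the limit.
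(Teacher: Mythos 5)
Your proposal is correct and follows essentially the same route as the paper: the paper simply cites Protter--Weinberger (Theorem 29, p.~30) for the $N=2$ Liouville result and Serrin--Zou for the $N\geq 3$ lower bound, describing the latter as "a standard maximum principle argument," which is exactly the annulus comparison with the barrier $v_R(x)=m\,\frac{|x|^{2-N}-R^{2-N}}{1-R^{2-N}}$ that you carry out. Your $N=1$ concavity argument and your $N=2$ log-barrier iteration (which you rightly note needs the strong minimum principle to close, or can be replaced by a direct citation) are faithful elaborations of the facts the paper defers to its references.
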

If $N=1$, the proof of Lemma \ref{lm2}(i) is obvious. If $N=2$, Lemma \ref{lm2}(i) is given in \cite[Theorem 29, page 30]{PW84}. Finally, if $N\geq 3$ the proof of Lemma \ref{lm2}(ii) follows from a standard maximum principle argument. It is also a consequence of \cite[Lemma 2.3]{SZ02}.
\section{Proof of Theorem \ref{thm1}}
Assume $u \in \mathcal{C}^4(\R^N)$, $u \geq 0$ and $u \not\equiv 0$ is a solution of \eqref{P-}. We claim that $\Delta u \geq 0$ in $\R^N$.  Letting $v = \Delta u$, then \eqref{P-} implies
\begin{equation}\label{dv1}
-\Delta v + \lambda v \geq 0 \quad \text{in }\;\;\R^N.
\end{equation}
Assume by contradiction that $v(x_0) < 0$ for some $x_0 \in \R^N$. Replacing $v(x)$ by $v(x + x_0)$, we may assume that $v(0) < 0$. Denote by $\overline{v}(r)$ the spherical average of $v(x)$ over $\partial B_r$, so that 
\[\overline{v}(r) = \fint_{\partial B_r} v(x) d\sigma(x)\quad\mbox{ for all }r\geq 0.\]
It follows from \eqref{dv1} that
\begin{equation}\label{dv2}
    -\Delta \overline{v}(r) +  \lambda\overline{v}(r) \geq 0 \quad \text{for all } r \geq 0.
\end{equation}
Since $\overline{v}(0)=v(0)<0$, we may define
$$
\rho:= \sup\{r > 0: \overline{v}(t) < 0\quad \text{for all }\; 0 < t < r\}>0.
$$
We claim that $\rho = \infty$. Assuming the contrary, we have $\rho<\infty$ and by continuity argument it follows $\overline{v}(\rho) = 0$. Then, \eqref{dv2} implies 
$$
\Delta \overline{v}(r) \leq  \lambda \overline{v}(r) < 0\quad\mbox{for all }0\leq r<\rho.
$$
This means that $(r^{N -1}\overline{v}')' < 0$ for all $0\leq r<\rho$. Thus, the mapping 
$[0, \rho]\ni r \longmapsto r^{N -1}\overline{v}'(r)$ is decreasing which yields  $\overline{v}' < 0$ in $[0, \rho]$.  In particular, this implies $\overline{v}(\rho) < \overline{v}(0) < 0$  which contradicts $\overline v(\rho)=0$. This proves $\rho=\infty$. Hence,
$$
\overline v(r)=\Delta \overline u(r)\leq \overline v(0)=-c\quad\mbox{for all }r\geq 0,\quad \text{where } c > 0 \text{ is a constant.}
$$
This yields
$$
\big(r^{N-1}\overline u'(r)\big)'\leq -cr^{N-1}\quad \mbox{for all }r\geq 0.
$$
Integrating twice in the above estimate we obtain
 $$
 \overline{u}(r) < -\frac{cr^2}{2N} + \overline{u}(0) \quad\mbox{for all }r\geq 0.
$$
Therefore, $\overline{u}(r) < 0$ for $r$ large, which contradicts our assumption that $u(x) \geq 0$ for any $x \in \R^N$. We conclude that $\Delta u \geq 0$ in $\R^N$. 
\\
(i) Take $r_0 > 0$ so that $\sup_{B_{r_0}} u > 0$. If $p > 1$, then for $R > 2r_0$, by the weak Harnack Inequality (see \cite[Theorem 1.3]{Tru67}), we have
$$
\left(\fint_{B_{R}} u^p\right)^{\frac{1}{p}} \geq c\sup\limits_{B_{r_0}} u \geq C,\quad \text{where } c, C > 0 \text{ are constants}.
$$
It follows that
$$
\int_{B_R} u^p \geq CR^N.
$$
If $p = 1$, by using $\Delta u \geq 0$ in $\R^N$ and taking the average, we have $(r^{N - 1}\overline{u}')' \geq 0$ for all $r \geq 0$. This means that $\overline{u}' \geq 0$ and $\overline{u}$ is increasing. Thus, $\overline{u}(r) \geq \overline{u}(r_0) = c > 0$ for all $r \geq r_0$ and $c > 0$ is a constant independent of $r$. Then, for any $R > 2r_0$, we have
\[\begin{split}
\int_{B_R} u &\geq \int_{B_R\setminus B_{r_0}} u = \sigma_N \int_{r_0}^R r^{N-1} \overline{u}(r) dr\\
&\geq C\int_{r_0}^R r^{N-1} dr = c(R^N - r_0^N) \geq CR^N,
\end{split}\]
where $C$ is a positive constant independent of $R$. Hence, for all $p \geq 1$, we conclude
\begin{equation}\label{harnack}
    \int_{B_R} u^p \geq CR^N.
\end{equation}
Then, using \eqref{harnack}, we estimate
\begin{equation*}
\begin{split}
(\K1 * u^p)(0) &= \int_{\R^N} \K1(y) u^p(y) dy \\
&\geq \int_{\Bo}\frac{u^p(y)\log^{\beta}(1 + \abs{y})}{\abs{y}^{\alpha}}dy \\
&\geq \frac{\log^{\beta}(1 + R)}{R^{\alpha}}  \int_{\Bo}  u^p(y) dy\\
&\geq CR^{N - \alpha}\log^{\beta}(1 + R) \to \infty \quad \text{as } R \to \infty.\\
\end{split}
\end{equation*}
Hence, $(\K1 * u^p)(0) = +\infty$ which contradicts condition \eqref{KKK}.  This shows that \eqref{P-}  has no solutions whenever $p \geq 1$.\\
(ii) Assume that $p < 1$ and $u(x)$ is a non-negative bounded solution of \eqref{P-} such that $u \leq \sup_{\R^N}u = M < \infty$. By applying the weak Harnack inequality as before for the exponent $p + 1$, we obtain
\[\int_{B_R} u^p \geq \frac{1}{M}\int_{B_R}u^{p+1} \geq CR^N.\]
for some positive constant $C$ independent of $R$. It follows that
\begin{equation*}
\begin{split}
(\K1 * u^p)(0) &= \int_{\R^N} \K1(y) u^p(y) dy \\
&\geq \int_{\Bo}\frac{u^p(y)\log^{\beta}(1 + \abs{y})}{\abs{y}^{\alpha}}dy \\
&\geq CR^{N - \alpha}\log^{\beta}(1 + R) \to \infty \quad \text{as } R \to \infty.
\end{split}
\end{equation*}
Thus, \eqref{P-} has no bounded solutions whenever $p < 1$.\\
(iii) Assume $u(x) \geq 0$ is a radial solution of \eqref{P-} and $u \not\equiv 0$. From the previous section, we know that $\Delta u \geq 0$ in $\R^N$. It follows that 
\[r^{N - 1}\Delta u = (r^{N-1}u')' \geq 0 \quad \text{for all } r \geq 0.\]
Then $u' \geq 0$ and $u$ is an increasing function. Thus $u(x) \geq u(x_0) = c > 0$ whenever $|x| \geq |x_0| = r$, for some $r > 0$. Moreover,
\begin{equation*}
\begin{split}
(\K1 * u^p)(0) &= \int_{\R^N} \K1(y) u^p(y) dy \\
&\geq c^p \int_{\R^N \setminus B_r} |y|^{-\alpha}\log^{\beta}(1 + |y|)dy\\
&= c^p \int_r^{\infty} t^{N - \alpha - 1}\log^{\beta}(1 + t)dt = \infty \quad \text{since } \alpha, \beta \text{ satisfy}\; \eqref{ptt}.
\end{split}
\end{equation*}
Hence, $(\K1 * u^p)(0) = +\infty$ which contradicts condition \eqref{KKK}.  This shows that \eqref{P-}  has no radial solutions and concludes our proof.
\qed
\section{Proof of Theorem \ref{thm2}}
Assume $u \in \mathcal{C}^4(\R^N)$, $u \geq 0$ and $u \not\equiv 0$ is a solution of \eqref{P+}. We claim that $-\Delta u \geq 0$ in $\R^N$. Letting $w = -\Delta u$, from  \eqref{P+} we deduce
\begin{equation}\label{dw1}
-\Delta w + \lambda w \geq 0  \quad\text{ in }\;\;\R^N.
\end{equation}
Assume by contradiction that $w(x_0) < 0$ for some $x_0 \in \R$. Replacing $w(x)$ by $w(x + x_0)$, we may assume that $w(0) < 0$. Again, denote by $\overline{w}(r)$ the spherical average of $w(x)$ over $\partial B_r(0)$. It follows from \eqref{dw1} that 
\begin{equation}\label{dw2}
    -\Delta \overline{w}(r) + \lambda \overline{w}(r) \geq 0 \quad \mbox{for all }r\geq 0.
\end{equation}
We next apply a similar argument as in Theorem \ref{thm1} to obtain $\overline{w} \leq  \overline{w}(0) < 0$. Therefore, 
$$
-\Delta \overline{u}(r) = \overline{w}(r) \leq \overline{w}(0) = -c \quad \mbox{for all }r\geq 0,
$$
where $c>0$ is a constant. 
Integrating twice in the above inequality we obtain
$$
 \overline{u}(r) \geq \frac{cr^2}{2N} + \overline{u}(0) \quad \mbox{for all }r\geq 0.
$$
By Coarea formula, we get
\begin{equation*}
\begin{split}
 (\K1 * u^p)(0) &= \int_0^{\infty}\int_{\partial B_r}\frac{u^p(y)\log^{\beta}(1 + \abs{y})}{\abs{y}^{\alpha}}d\sigma(y)dr\\
&= C \int_0^{\infty} r^{N - \alpha - 1}\log^{\beta}(1 + r)\fint_{\partial B_r} u^p(y) d\sigma(y) dr\\
\end{split} 
\end{equation*}
We further apply Jensen's inequality to $u^p(y)$ where $p \geq 1$ to obtain:
\begin{equation*}
\begin{split}
(\K1 * u^p)(0) &\geq C\int_0^{\infty} r^{N - \alpha - 1}\log^{\beta}(1 + r)\left(\fint_{\partial B_r} u(y) d\sigma(y)\right)^p dr\\
&> C\int_0^{\infty} r^{N - \alpha - 1 + 2p}\log^{\beta}(1 + r) = \infty,
\end{split} 
\end{equation*}
which contradicts the condition \eqref{KKK}. Thus, we conclude that $-\Delta u \geq 0$ in $\R^N$. \\
(i) Since $-\Delta u \geq 0$, $u\geq 0$ and $u\neq 0$  in $\R^N$, by the maximum principle we have $u>0$.  Since $1 \leq N \leq 2$, by Lemma \ref{lm2}(i) we deduce $u$ is constant. This is a clear contradiction, since constant functions $u$ do not satisfy \eqref{KKK}. Thus, \eqref{P+} has no non-negative solutions if $N = 1, 2$.\\
(ii) Assume $N \geq 3$. From $-\Delta u \geq 0$ in $\R^N$ and $u\not\equiv 0$, we deduce $u>0$ in $\R^N$. Thus, by \eqref{ssup} in Lemma \ref{lm2}(ii), we deduce $u(x) \geq c\abs{x}^{2 - N}$ in $\R^N\setminus B_1$, for some $c > 0$. It follows that
\begin{align}\label{lm2ii}
    (\K1 * u^p)(0) &\geq \int_{\R^N\setminus B_1}\K1(y)u^p(y) dy \nonumber\\
    &\geq C\int_{\R^N\setminus B_1} \abs{y}^{(2-N)p - \alpha} \log^{\beta}(1 + \abs{y}) dy \nonumber\\
    &\geq C\int_{1}^{\infty} r^{N - 1 - \alpha - p(N - 2)}\log^{\beta}(1 + r) dr.
\end{align}
Hence, $(\K1 * u^p)(0) = +\infty$ if $N - \alpha - (N - 2)p > 0$ which contradicts condition \eqref{KKK}. This shows that \eqref{P+} has no solutions whenever $1 \leq p < \frac{N - \alpha}{N - 2}$. \\
\\
(iii) Assume $N \geq 3$ and $0 \leq \alpha \leq 2$. Let $p = \frac{N - \alpha}{N - 2}$. From \eqref{lm2ii}, we have
\begin{equation*}
\begin{split}
(\K1 * u^p)(0) &\geq \int_{\R^N\setminus B_1}\K1(y)u^p(y) dy \\
&\geq C\int_{1}^{\infty} r^{N - 1 - \alpha - p(N - 2)}\log^{\beta}(1 + r) dr \\
&=  C\int_{1}^{\infty}r^{-1}\log^{\beta}(1 + r) dr = \infty \quad \text{if } \beta \geq -1.
\end{split}
\end{equation*}
Thus, \eqref{P+} has no solutions whenever $p = \frac{N - \alpha}{N - 2}$ and $\beta \geq -1$. \\
\\
(iv) Let $\psi \in C_c^{\infty}(\R^N)$, $0 \leq \psi \leq 1$, supp$(\psi) = \overline{B}_2 \subset \R^N$ and $\psi \equiv 1$ on $\overline{B}_1$. For $R \geq 1$, let $\varphi(x) = \psi^4\left(\frac{x}{R}\right)$. We take $\varphi^2$ as the test function in \eqref{P+}. Observe that
\begin{equation*}
\Delta \varphi^2(x) = \Delta\left(\psi^8\xR \right) 
= \frac{8}{R^2} \psi^6\xR \left\{7|\nabla \psi|^2\xR + \psi\xR \Delta\psi \xR\right\}.
\end{equation*}
Then,
\begin{equation*}
|\Delta \varphi^2| \leq \frac{C}{R^2} \psi^6 \leq  \frac{C}{R^2} \varphi \quad\text{in } \R^N.
\end{equation*}
Similarly, we find
\begin{equation*}
|\Delta^2 \varphi^2| \leq \frac{C}{R^4} \varphi \quad\text{in } \R^N.
\end{equation*}
Thus,
\begin{equation}\label{test1}
|\Delta^2 (\varphi^2) - \lambda\Delta \varphi^2| \leq \frac{C}{R^2} \varphi \quad\text{in } \R^N,
\end{equation}
where $C > 0$ is a positive constant independent of $R$. Using $\varphi^2$ as a test function in \eqref{P+}, we have
\begin{align}\label{test2}
\int_{\R^N}(\K1 * u^p)u^q\varphi^2 &\leq \int_{B_{2R}}\varphi^2(\Delta^2 u - \lambda\Delta u)\nonumber \\
&= \int_{B_{2R}}u (\Delta^2 \varphi^2 - \lambda \Delta \varphi^2) \nonumber\\
&\leq \int_{B_{2R}}u |\Delta^2 \varphi^2 - \lambda \Delta \varphi^2| \nonumber\\
&\leq \frac{C}{R^2}\int_{B_{2R}}u\varphi.
\end{align}
Let $x \in \text{supp}(\varphi^2) = B_{2R}$. Then, for $R>1$ large we have
\begin{align}\label{test3}
(\K1 * u^p)(x) &\geq \int_{B_{2R}} u^p(y) |x - y|^{-\alpha}\log^{\beta}(1 + |x - y|)\varphi dy \nonumber\\       
&\geq (4R)^{-\alpha}\log^{\beta}(1 + 4R)\int_{B_{2R}}u^p\varphi.
\end{align}
From \eqref{test2} and \eqref{test3}, we deduce
\begin{equation}\label{uph1}
    \int_{B_{2R}}u\varphi \geq CR^{2-\alpha}\log^{\beta}(1 + 4R)\left(\int_{B_{2R}}u^p\varphi \right)\left(\int_{B_{2R}}u^q\varphi^2\right).
\end{equation}
We claim next that 
\begin{equation}\label{uph2}
\int_{B_{2R}}u^p\varphi \geq CR^{-N(p-1)} \left(\int_{B_{2R}}u\varphi \right)^{p},
\end{equation}
for some $C>0$. Indeed, if $p=1$, then \eqref{uph2} clearly holds. Assume next $p>1$ and let $p'>1$ be its H\"older's conjugate. We apply the H\"older's inequality to obtain
\begin{equation*}
\int_{B_{2R}}u\varphi \leq \left(\int_{B_{2R}}u^p\varphi \right)^{\frac{1}{p}}\left(\int_{B_{2R}}\varphi \right)^{\frac{1}{p'}} \leq CR^{\frac{N}{p'}} \left(\int_{B_{2R}}u^p\varphi \right)^{\frac{1}{p}}.    
\end{equation*}
This yields \eqref{uph2} which now combined with \eqref{uph1} gives
\begin{equation}\label{uph3}
C' \geq R^{2 - \alpha - N(p - 1)}\log^{\beta}(1 + 4R) \left(\int_{B_{2R}}u\varphi \right)^{p-1}\left(\int_{B_{2R}}u^q\varphi^2\right),
\end{equation}
where $C' > 0$ is a constant. Since $-\Delta u \geq 0$ in $\R^N$, we have $u(x) \geq c|x|^{2 - N}$ in $\R^N \setminus B_1$. Using this estimate in \eqref{uph3}, for $R > 1$ large, we get
\begin{equation}\label{est1}
\begin{split}
C' &\geq CR^{2 - \alpha - N(p-1) + 2(p-1) +N - q(N-2)}\log^{\beta}(1 + 4R)\\
&= CR^{2N - \alpha - (N-2)(p+q)}\log^{\beta}(1 + 4R),
\end{split}    
\end{equation}
where $C$ is a positive constant independent of $R$. Thus, the above estimate yields a contradiction if the exponent of $R$ is positive. Therefore, \eqref{P+} has no solutions if $p + q < \frac{2N - \alpha}{N - 2}$ and $\beta > \alpha - N$.\\
\\
(v) Assume $p + q = \frac{2N - \alpha}{N - 2}$. If $\beta > 0$, the estimate \eqref{est1} still yields a contradiction since \eqref{est1} is equivalent to $C' \geq C\log^{\beta}(1 + 4R)$ for $\beta > 0$ and $R > 1$ large.\\ 
It remains to raise a contradiction in the case where $ \frac{1}{p + q} - 1 < \beta \leq 0$. We also note that we may assume $p>\frac{N-\alpha}{N-2}$, since, otherwise, if $p\leq \frac{N-\alpha}{N-2}$ and $\beta\geq -1$ we know from part (ii) and (iii) above that \eqref{P+} has no non-negative solutions. Thus, we may assume $p>\frac{N-\alpha}{N-2}$. Recall that by \eqref{ssup} in Lemma \ref{lm2}(ii) we have $u(x) \geq c|x|^{2 - N}$ in $\R^N \setminus B_1$, where $c > 0$. Thus, for $|x| > 1$ we have
\begin{equation*}
\begin{split}
(\K1 * u^p)(x) &\geq \int_{|y| \geq |x|} u^p(y) |x-y|^{-\alpha}\log^{\beta}(1 + |x-y|) dy\\
&\geq C\int_{|y| \geq |x|} u^p(y) (2|y|)^{-\alpha}\log^{\beta}(1 + 2|y|) dy\\
&\geq C\int_{|y| \geq |x|} |y|^{-\alpha - p(N -2)}\log^{\beta}(1 + 2|y|) dy\\
&\geq C|x|^{N - \alpha - p(N - 2)}\log^{\beta}(1 + 2|x|).
\end{split}    
\end{equation*}
Combine the above estimate into \eqref{P+} to obtain
\begin{equation*}
\begin{split}
\Delta^2 u - \lambda \Delta u &\geq (\K1 * u^p)u^q\\
&\geq C|x|^{N - \alpha - (p + q)(N - 2)}\log^{\beta}(1 + |x|)\\
&\geq C|x|^{-N}\log^{\beta}|x|\quad\text{if } |x| > 2 \text{ large}.
\end{split}    
\end{equation*}
Thus, $\Delta^2 u - \lambda \Delta u \geq C|x|^{-N}\log^{\beta}|x|$  in $\R^N \setminus B_2$. Let $v = -\Delta u \geq 0$. Then
\[-\Delta v + \lambda v \geq C|x|^{-N}\log^{\beta}|x|\quad \text{in } \R^N\setminus B_2.\]
Let $z(x) = C|x|^{-N}\log^{\beta}|x|$ which satisfies
\[-\Delta z + \lambda z \leq C|x|^{-N}\log^{\beta}|x| \quad \text{if } |x| > r_0 > 2 \text{ large}.\]
Let $c > 0$ be such that $v \geq cz$ on $\partial B_{r_0}$ and $\varepsilon > 0$. Then, there exists $R_{\varepsilon} > r_0 > 2$ so that
\[v + \varepsilon \geq cz\quad \text{in } \R^N \setminus B_{R_{\varepsilon}}.\]
By the maximum principle, we have $v + \varepsilon \geq cz$ in $B_{R_{\varepsilon}}\setminus B_{r_0}$. It follows that
\begin{equation}\label{est2}
v + \varepsilon \geq cz \quad \text{in } \R^N \setminus B_{r_0}.
\end{equation}
Since $\varepsilon > 0$ was arbitrary, \eqref{est2} yields $v \geq cz$ in $\R^N \setminus B_{r_0}$, so that for some constant $C>0$ we have
\[-\Delta u \geq C|x|^{-N}\log^{\beta}|x| \quad \text{in } \R^N \setminus B_{r_0}.\]
Let now $w(x) = |x|^{2-N}\log^{\theta}|x|$, where 
$$
\frac{-\beta}{p + q - 1} < \theta < 1 + \beta.
$$ 
Then, by direct computation one has
\[\begin{split}
-\Delta w &= \theta(N - 2)|x|^{-N}\log^{\theta-1}|x| + \theta(\theta - 1)|x|^{-N}\log^{\theta - 2}|x|\\
&\leq C|x|^{-N}\log^{\beta}|x| \leq -\Delta u \quad \text{in } \R^N \setminus B_{r_0}.    
\end{split}
\]
As above, let $c > 0$ so that $u \geq cw$ on $\partial B_{r_0}$. Take $\varepsilon > 0$. Then, there exists $R_{\varepsilon} > r_0 > 0$ so that $u + \varepsilon \geq cw$ in $\R^N \setminus B_{R_\varepsilon}$. By the maximum principle, we have $u + \varepsilon \geq cw$ in $ B_{R_\varepsilon} \setminus B_{r_0}$, thus $u + \varepsilon \geq cw$ in $\R^N \setminus B_{r_0}$. Since $\varepsilon > 0$ was arbitrary, it follows that
\begin{equation}\label{est3}
u \geq cw \geq c|x|^{2 - N} \log^{\theta}|x| \quad \text{in } \R^N \setminus B_{r_0}.   
\end{equation}
Finally, we use \eqref{est3} into the estimate \eqref{uph3} to obtain
\begin{equation}\label{thm2c11}
\begin{split}
C^{'} &\geq R^{2 - \alpha - N(p - 1)}\log^{\beta}R \left(\int_{B_{2R}\setminus B_{r_0}}u \right)^{p-1}\left(\int_{B_{2R}\setminus B_{r_0}}u^q \right)\\
&\geq CR^{2 - \alpha - N(p - 1) + 2(p - 1) + N - q(N - 2)}\log^{\beta + (p - 1 + q)\theta}R\\
&= C\log^{\beta + (p + q - 1)\theta}R \to \infty \quad \text{as } R \to \infty.
\end{split}
\end{equation}
Hence, \eqref{P+} has no non-negative solutions if $p + q = \frac{2N - \alpha}{N - 2}$ and $\beta > \frac{1}{p + q} - 1$.\\
\\
(vi-vii) Assume $N \geq 3$, $q>1$ and $0 \leq \alpha < 2$. We fix $\delta \in (1, q)$ and let $k\geq 1$ be such that 
\begin{equation}\label{eee0}
k > \frac{4}{\delta - 1}.
\end{equation} 
Let $\psi \in C_c^{\infty}(\R^N)$, $0 \leq \psi \leq 1$, supp$(\psi) = \overline{B}_2 \subset \R^N$ and $\psi \equiv 1$ on $\overline{B}_1$. For $R \geq 1$, let $\varphi(x) = \psi^k\left(\frac{x}{R}\right)$.  
We use $\varphi^\delta$ as a test function in \eqref{P+} and obtain 
\begin{equation}\label{tf}
\int_{B_{2R}}  (\Delta^2 u - \lambda \Delta u) \varphi^\delta \geq \int_{B_{2R}}(\K1 * u^p)u^q\varphi^\delta.
\end{equation}
We next estimate from above the left-hand side integral in \eqref{tf}. 
Applying a similar argument to that in part (iv) where we deduced the estimate \eqref{test1}, from \eqref{eee0} we have
$$
|\Delta^2 (\varphi^\delta) - \lambda\Delta \varphi^\delta| \leq \frac{C}{R^2} \varphi \quad\text{in } \R^N,
$$
and thus
\begin{equation}\label{tf1}
\begin{aligned}
\int_{B_{2R}}  (\Delta^2 u - \lambda \Delta u) \varphi^\delta =
\int_{B_{2R}}  u(\Delta^2 (\varphi^\delta) - \lambda\Delta \varphi^\delta) \leq  \frac{C}{R^2} \int_{B_{2R}}u\varphi.
\end{aligned}
\end{equation}
where $C > 0$ is a positive constant independent of $R$.
To estimate the right-hand side integral in \eqref{tf}, we first apply 
Lemma \ref{lm1}(i) to obtain  
$$    (\K1 * u^p)(x) \geq C|x|^{-\alpha}\log^{\beta}(1 + |x|)\quad\text{in } \R^N \setminus B_1.
$$
Hence, for $R>1$ large we have 
\begin{equation}\label{tf2}
\int_{B_{2R} }(\K1 * u^p)u^q\varphi^\delta \geq CR^{- \alpha}\log^{\beta}(1 + R) \int_{B_{2R}} u^q \varphi^\delta.
\end{equation}
Combining now \eqref{tf}, \eqref{tf1} and \eqref{tf2} we derive
\begin{equation}\label{tf3}
\int_{B_{2R}}u\varphi \geq CR^{2- \alpha}\log^{\beta}(1 + R) \int_{B_{2R}} u^q \varphi^\delta.
\end{equation}
By H\"older's inequality we obtain
\begin{equation*}
\int_{B_{2R}}u\varphi \leq \left(\int_{B_{2R}}u^q\varphi^\delta \right)^{\frac{1}{q}}\left(\int_{B_{2R}}\varphi^{\frac{q-\delta}{q-1}} \right)^{\frac{1}{q'}} \leq CR^{\frac{N}{q'}} \left(\int_{B_{2R}}u^q\varphi^{\delta} \right)^{\frac{1}{q}}.    
\end{equation*}
This implies
$$
\int_{B_{2R}}u^q\varphi^{\delta} \geq CR^{-N(q - 1)} \left(\int_{B_{2R}}u\varphi \right)^{q}.
$$
Plugging this last estimate into \eqref{tf3} we deduce
\begin{equation}\label{thm2vi2}
C'\geq R^{2 - \alpha - N(q - 1)}\log^{\beta}(1 + R) \left(\int_{B_{2R}}u\varphi \right)^{q-1}.
\end{equation}
Using $u(x)\geq c|x|^{2-N}$ in $\R^N\setminus B_1$ (which holds thanks to Lemma \ref{lm2}(ii)), it follows that
\begin{align*}
 C' &\geq CR^{2 - \alpha - N(q - 1)}\log^{\beta}(1 + R)\left(\int_{B_{2R}\setminus B_1}|x|^{2-N} dx \right)^{q - 1}\\
 &\geq CR^{2 - \alpha - N(q - 1) + 2(q - 1)}\log^{\beta}(1 + R) \nonumber\\
 &= CR^{N - \alpha - (N - 2)q}\log^{\beta}(1 + R)
 \quad\text{for all } R > 1,
\end{align*}
where $C, C' > 0$ are positive constants independent of $R$. Thus, we conclude that \eqref{P+} does not have non-negative solutions if $1 < q < \frac{N - \alpha}{N - 2}$ or $q = \frac{N - \alpha}{N - 2}$ and $\beta > 0$.\\
If $q = \frac{N - \alpha}{N - 2}$ and $-1+\frac{1}{q}<\beta\leq 0$, then $-\frac{\beta}{q-1}<1+\beta$ and we chose $\theta>0$ such that 
$$
0\leq -\frac{\beta}{q-1}<\theta<1+\beta<1.
$$
As in part (v), the estimate \eqref{est3} holds which we now use it into \eqref{thm2vi2} to obtain
\begin{align*}
 C' &\geq CR^{2 - \alpha - N(q - 1)}\log^{\beta}(1 + R)\left(\int_{B_{2R}\setminus B_1}|x|^{2-N}\log^\theta|x|  dx \right)^{q - 1}\\
 &\geq CR^{2 - \alpha - N(q - 1) + 2(q - 1)}\log^{\beta+(q-1)\theta}R \nonumber\\
 &= C\log^{\beta+(q-1)\theta}R
 \quad\text{for all } R > 1.
\end{align*}
This leads to a contradiction as $R\to \infty$, since $\beta+(q-1)\theta>0$.\\
\\
(viii) Assume $p=\frac{N}{N-2}$, $q = \frac{N - \alpha}{N - 2}$ and $-2 + \frac{1}{q} < \beta \leq 0$. Then, using Lemma \ref{lm2}(ii) we have
$$
u^p(x)\geq c|x|^{-p(N-2)}=c|x|^{-N}\quad\mbox{  in } \R^N\setminus B_1.
$$ 
By Lemma \ref{lm1}(i) it follows that $\K1 * u^p$ satisfies the improved estimate
\begin{equation}\label{lmr8}
(\K1 * u^p)(x) \geq C|x|^{-\alpha}\log^{1 + \beta}(1 + |x|)\quad\text{in }\R^N \setminus B_1.
\end{equation}
Therefore, retaking the estimates \eqref{tf}-\eqref{tf3} in the proof of part (vi-vii) and \eqref{lmr8}, for all $R > 1$ we have
\begin{align}\label{lmr9}
\int_{B_{2R}}u\varphi &\geq CR^{2}\int_{B_{2R}}\varphi^\delta (\Delta^2 u - \lambda \Delta u) \nonumber\\
&\geq CR^{2}\int_{B_{2R}}(\K1 * u^p)u^q\varphi^\delta  \nonumber \\
&\geq CR^{2 - \alpha}\log^{1 + \beta}(1 + R)\int_{B_{2R}}u^q\varphi^\delta  \nonumber\\
&\geq CR^{2 - \alpha - N(q - 1)}\log^{1 + \beta}(1 + R)\left(\int_{B_{2R}}u\varphi \right)^{q}.
\end{align}
We next divide our analysis into two separate cases.\\
{\bf Case 1:} If $-1 < \beta \leq 0$. Since $u(x) \geq c|x|^{2 - N}$ in $B_{2R}\setminus B_1$, it follows that
\begin{align*}
 C' &\geq CR^{2 - \alpha - N(q - 1)}\log^{1 + \beta}(1 + R)\left(\int_{B_{2R}}u\varphi \right)^{q - 1}\\
 &\geq CR^{2 - \alpha - N(q - 1) + 2(q - 1)}\log^{1 + \beta}R \nonumber\\
 &= C\log^{1 + \beta}R
 \quad\text{for all } R > 1.   
\end{align*}
where $C, C' > 0$ are positive constants independent of $R$. Thus, since $1 + \beta > 0$, we conclude that \eqref{P+} does not have non-negative solutions in Case 1.\\
{\bf Case 2:} If $-2 + \frac{1}{q}<\beta \leq -1$. We choose $\theta_0 > 0$ such that
\begin{equation}\label{thet}
0 < \frac{-1 - \beta}{q - 1} < \theta_0 < 2 + \beta \leq  1.
\end{equation}
Using the estimate \eqref{est3} into \eqref{lmr9} we obtain:
\begin{align*}
 C' &\geq CR^{2 - \alpha - N(q - 1)}\log^{1 + \beta}(1 + R)\left(\int_{B_{2R}}|x|^{2-N}\log^{\theta_0}|x| \right)^{q - 1}\\
 &\geq CR^{2 - \alpha - N(q - 1) + 2(q - 1)}\log^{1 + \beta + (q - 1)\theta_0}R \nonumber\\
 &= C\log^{1 + \beta+ (q - 1)\theta_0}R
 \quad\text{for all $R$ large}.    
\end{align*}
where $C, C' > 0$ are positive constants independent of $R$. Thus, since $1 + \beta+ (q - 1)\theta_0>0$, we conclude that \eqref{P+} does not have non-negative solutions in Case 2, thereby completes our proof by combining Case 1 and Case 2.\\
\\
(ix) Assume $p=\frac{N - \alpha}{N-2}$, $q = \frac{N}{N - 2}$ and $-2 + \frac{1}{q}<\beta \leq 0$. Let us note that from part (iii) above, \eqref{P+} has no solutions if $\beta \geq -1$. Thus, we shall assume in the following that $-2 + \frac{1}{q}<\beta \leq -1$. Using Lemma \ref{lm2}(ii) we have
$$
u^p(x)\geq c|x|^{-p(N-2)}=c|x|^{-(N - \alpha)}\quad\mbox{  in } \R^N\setminus B_1.
$$ 
Thus, for all $|y| \geq 2|x| > 2$ and $\frac{|y|}{2} \leq |x - y| \leq \frac{3|y|}{2}$, one has
\begin{align}\label{lm291}
(\K1 * u^p)(x) &\geq C\int_{|y| \geq 2|x|}|y|^{-\alpha - p(N - 2)}\log^{\beta}(1 + |y|)dy \nonumber\\[0.2cm]
&= C\int_{|y| \geq 2|x|}|y|^{-N}\log^{\beta}(1 + |y|)dy \nonumber\\[0.2cm]
&\geq C\log^{1 + \beta}(1 + |x|)\quad\text{for all }x \in \R^N\setminus B_1.
\end{align}
Retaking the estimates \eqref{tf}-\eqref{tf3} in the proof of part (vi-vii) and \eqref{lm291}, for all $R > 1$ we have
\begin{align}\label{lm292}
\int_{B_{2R}}u\varphi &\geq CR^{2}\int_{B_{2R}} (\Delta^2 u - \lambda \Delta u)\varphi^\delta \nonumber\\
&\geq CR^{2}\int_{B_{2R}}(\K1 * u^p)u^q\varphi^\delta  \nonumber \\
&\geq CR^{2}\log^{1 + \beta}(1 + R)\int_{B_{2R}}u^q\varphi^\delta  \nonumber\\
&\geq CR^{2 - N(q - 1)}\log^{1 + \beta}(1 + R)\left(\int_{B_{2R}}u\varphi \right)^{q}.
\end{align}
Similarly, we choose $\theta_0 > 0$ that satisfies \eqref{thet}.
Using the estimate \eqref{est3} into \eqref{lm292} we find:
\begin{align*}
 C' &\geq CR^{2 - N(q - 1)}\log^{1 + \beta}(1 + R)\left(\int_{B_{2R}}|x|^{2-N}\log^{\theta_0}|x| \right)^{q - 1}\\
 &\geq CR^{2 - N(q - 1) + 2(q - 1)}\log^{1 + \beta + (q - 1)\theta_0}R \nonumber\\
 &= C\log^{1 + \beta+ (q - 1)\theta_0}R
 \quad\text{for all $R$ large}.   
\end{align*}
where $C, C' > 0$ are positive constants independent of $R$. Thus,  since $1 + \beta+ (q - 1)\theta_0>0$, we conclude that \eqref{P+} does not have non-negative solutions in this case and thus complete the proof of our Theorem.\qed
\section{Proof of Theorem \ref{thm3}} 
Assume $N \geq 3$. To construct a solution $u \in \mathcal{C}^4(\R^N)$, we consider 
\begin{equation}\label{Pu}
    u(x) = \Phi(x) * w^{-\gamma}\log^{\tau}w \quad\text{in } \R^N,
\end{equation}
where $w(x) = (A + |x|^2)^{\frac{1}{2}}$ for some $A > e$ and $\Phi(x) = \frac{1}{(N - 2)\sigma_N}|x|^{-(N - 2)}$ is the fundamental solution of the Laplace operator.
\begin{lemma}\label{lml}
    Assume $2 < \gamma \leq N$, $-1 < \tau < 1$ and $A > e$. Let $u$ be given by \eqref{Pu}. Then, there exists $\lambda^* = \lambda(N, A, \gamma, \tau) > 0$ so that for all $\lambda > \lambda^*$ large one has
    \begin{equation*}
     \Delta^2 u - \frac{\lambda}{2}\Delta u \geq 0 \quad\text{in } \R^N. 
    \end{equation*}
    In  particular, one has
    \begin{equation}\label{dPl}
     \Delta^2u - \lambda\Delta u \geq -\frac{\lambda}{2} \Delta u = \frac{\lambda}{2}w^{-\gamma}\log^{\tau} w\quad \text{in }\R^N.   
     \end{equation}
\end{lemma}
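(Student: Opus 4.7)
The plan is to exploit the fact that $\Phi$ is the fundamental solution of $-\Delta$, so that $u = \Phi \ast f$ with $f := w^{-\gamma}\log^{\tau} w$ satisfies $-\Delta u = f$ on $\R^N$, and consequently $\Delta^2 u = -\Delta f$. The inequality $\Delta^2 u - \tfrac{\lambda}{2}\Delta u \geq 0$ therefore reduces to the purely pointwise estimate
$$
\Delta f \leq \tfrac{\lambda}{2}\, f \quad \text{in } \R^N,
$$
and once this is established, the consequence \eqref{dPl} follows from $-\tfrac{\lambda}{2}\Delta u = \tfrac{\lambda}{2} f \geq 0$.

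Before attacking the pointwise estimate I would record that $u$ is well-defined and smooth: since $\gamma > 2$, the integrand $\Phi(x - y) f(y)$ decays like $|y|^{-(N-2+\gamma)}\log^{\tau}|y|$ at infinity and is integrable near $y = x$ (as $N \geq 3$), so $\Phi \ast f$ converges absolutely for every $x$, giving $u \in \mathcal{C}^{\infty}(\R^N)$ with $-\Delta u = f$. For the pointwise bound, write $f(x) = g(w(x))$ with $g(t) = t^{-\gamma}\log^{\tau} t$ and compute
$$
|\nabla w|^2 = 1 - \frac{A}{w^2} \leq 1, \qquad \Delta w = \frac{N-1}{w} + \frac{A}{w^3} \leq \frac{N}{w},
$$
so that $\Delta f = g''(w)|\nabla w|^2 + g'(w)\Delta w$. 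A short calculation yields
$$
\frac{g'(t)}{g(t)} = -\frac{\gamma}{t} + \frac{\tau}{t \log t}, \qquad \frac{g''(t)}{g(t)} = \frac{\gamma(\gamma + 1)}{t^2} + O\!\left(\frac{1}{t^2 \log t}\right).
$$
Because $A > e$ implies $w \geq \sqrt{A}$ and $\log w \geq \tfrac{1}{2}\log A > \tfrac{1}{2}$ uniformly in $x$, every logarithmic factor in the above expressions stays controlled, and combining the estimates yields
$$
\frac{|\Delta f(x)|}{f(x)} \leq \frac{C(N, A, \gamma, \tau)}{w(x)^2} \leq \frac{C(N, A, \gamma, \tau)}{A}\quad \text{for all } x \in \R^N.
$$

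Setting $\lambda^* := 2\,C(N, A, \gamma, \tau)$ then gives $\Delta f \leq \tfrac{\lambda}{2} f$ pointwise for every $\lambda \geq \lambda^*$, completing the proof. The only subtle point is to ensure that the negative powers $\log^{\tau - 1} w$ and $\log^{\tau - 2} w$ appearing in $g'$ and $g''$ (when $\tau \in (-1, 1)$) do not blow up; the condition $A > e$ is introduced precisely to force $\log w$ to stay uniformly bounded below away from zero on all of $\R^N$, after which the rest is a routine chain-rule calculation that I do not expect to cause any trouble.
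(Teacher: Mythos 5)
Your proposal is correct and follows essentially the same route as the paper: since $-\Delta u = f := w^{-\gamma}\log^{\tau}w$, the claim reduces to showing $-\Delta f + \tfrac{\lambda}{2}f \ge 0$, which both you and the paper obtain by a direct computation of $\Delta f$ and an appeal to the size of the $\lambda$-term. The only difference is organizational: the paper expands $-\Delta f$ via the radial variable $r$, divides the whole expression by $w^{-\gamma-2}\log^{\tau-1}w$, and argues that the resulting leading term $\tfrac{\lambda}{2}w^{2}\log w$ dominates, whereas you normalize by $f$ itself and establish the uniform pointwise bound $|\Delta f|/f \le C(N,A,\gamma,\tau)/w^{2}$, so that any $\lambda \ge 2C/A$ suffices. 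Your version is a bit cleaner in that it isolates exactly what must be bounded and makes the choice of $\lambda^{*}$ explicit, but the mathematical content, the use of $\gamma>2$ for integrability, and the role of $A>e$ in keeping $\log w$ uniformly bounded below are identical.
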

\begin{proof}
From \eqref{Pu} we have
\begin{equation}\label{dPu}
-\Delta u(x) = w(x)^{-\gamma}\log^{\tau} w(x) \quad\text{in } \R^N.
\end{equation}
Let $v = -\Delta u$, by computing directly, we have
\begin{align*}
    v' =\, &-\gamma rw^{-\gamma - 2} \log^{\tau} w + \tau rw^{-\gamma - 2}\log^{\tau-1}w,\\
    v'' =\, &-\gamma w^{-\gamma - 2}\log^{\tau} w + \tau w^{-\gamma - 2}\log^{\tau-1} w + \gamma(\gamma + 2) r^2 w^{-\gamma - 4} \log^{\tau} w \\
    &- \tau(2\gamma + 2) r^2 w^{-\gamma - 4}\log^{\tau-1} w + \tau (\tau - 1)r^2 w^{-\gamma - 4}\log^{\tau - 2} w.     
\end{align*}
Since $r^2 = w^2 - A$, we find
\begin{align}\label{dPuu}
    -&\Delta v = -v'' - \frac{N - 1}{r}v' \nonumber\\
    =& \gamma(N - \gamma - 2)w^{-\gamma - 2}\log^{\tau}w + \tau (2\gamma + 2 - N)w^{-\gamma - 2}\log^{\tau -1}w + A\gamma(\gamma + 2)w^{-\gamma - 4}\log^{\tau}w \nonumber\\
     &- A\tau (2\gamma + 2)w^{-\gamma - 4}\log^{\tau -1}w + \tau(1 - \tau)(w^2  - A)w^{-\gamma - 4}\log^{\tau -2}w.
\end{align}
Then, combining \eqref{dPl}, \eqref{dPu} and \eqref{dPuu}, we get
\begin{align*}\label{dPl1}
&\Delta^2 u - \frac{\lambda}{2} \Delta u = -\Delta v + \frac{\lambda}{2} v \nonumber\\
=& w^{-\gamma - 2}\log^{\tau -1}w\left\{\left[\gamma(N - \gamma - 2) + \frac{\lambda w^2}{2}\right]\log w + \tau (2\gamma + 2 - N) \right\} \nonumber\\
&+Aw^{-\gamma - 4}\log^{\tau -1}w\left\{\gamma(\gamma + 2)\log w - \tau (2\gamma + 2) \right\}  + \tau(1 - \tau)(w^2  - A)w^{-\gamma - 4}\log^{\tau -2}w.
\end{align*}
Hence,
\begin{align*}
\frac{\Delta^2 u - \frac{\lambda}{2} \Delta u}{w^{-\gamma - 2}\log^{\tau -1}w} =&
\left[\gamma(N - \gamma - 2) + \frac{\lambda w^2}{2}\right]\log w + \tau (2\gamma + 2 - N) \nonumber\\
&+A\frac{\gamma(\gamma + 2)\log w - \tau (2\gamma + 2)}{w^2}  + \frac{\tau(1 - \tau)}{\log w}\Big(1  - \frac{A}{w^2}\Big).
\end{align*}
The leading term in the right-hand side  of the above equality is $\frac{\lambda w^2}{2}\log w$ and it is the only term containing $\lambda$. Thus, we may take the proper constant $\lambda^* = \lambda(N, A, \gamma, \tau) > 0$ such that
\begin{equation*}
\Delta^2 u - \frac{\lambda}{2}\Delta u \geq 0 \quad\text{in } \R^N. 
\end{equation*}
From the above, the inequality \eqref{dPl} follows immediately.
\end{proof}\vspace{-0.2cm}
\begin{lemma}\label{lmr}
Assume $0 \leq \alpha < N$, $\beta > \alpha - N$, $p, q > 0$, $2 < \gamma \leq N$ and $-1 < \tau < 1$. If $u(x)$ is the function defined in \eqref{Pu}, then for all $x \in \R^N$ one has
\begin{equation*}
u(x) \leq c \left\{
    \begin{aligned}
       &w^{-(\gamma - 2)}\log^{\tau}w &\text{if }& 2 < \gamma < N,\\
       &w^{-(N- 2)}\log^{1 + \tau}w &\text{if }& \gamma = N,
    \end{aligned}
\right.
\end{equation*}
for some constant $c>0$.
Furthermore, there exists $C = C(N, \alpha, \beta, A, p, q, \gamma, \tau) > 0$ such that
\begin{equation}\label{l52}
\begin{aligned}
(\K1 * u^p)u^q(x) &\leq C \begin{cases}
       \ds w^{-(\gamma - 2)q}\log^{1 + \beta + \tau(p + q)}w&\text{if }\ds \gamma < N,\, p = \frac{N - \alpha}{\gamma -2},\, \beta + \tau p < -1,\\[0.3cm] 
       \ds w^{N - \alpha - (\gamma - 2)(p + q)}\log^{\beta + \tau (p + q)}w &\text{if }\ds \gamma < N,\, \frac{N - \alpha}{\gamma - 2} < p < \frac{N}{\gamma - 2},\\[0.3cm]
       w^{- \alpha - (\gamma - 2)q}\log^{1 + \beta + \tau (p + q)}w &\text{if }\ds \gamma < N,\, p = \frac{N}{\gamma - 2},\,  \tau p>-1\\[0.3cm] 
       w^{- \alpha - (\gamma - 2)q}\log^{\beta + \tau q}w &\text{if }\ds \gamma < N,\, p > \frac{N}{\gamma - 2},\\[0.3cm] 
       w^{-(N - 2)q}\log^{1 + \beta + (1 + \tau)(p + q)}w&\text{if }\ds \gamma = N,\, p = \frac{N - \alpha}{N -2},\, \beta + (1 + \tau) p < -1,\\[0.3cm] 
       w^{N - \alpha - (N - 2)(p + q)}\log^{\beta + (1 + \tau)(p + q)}w&\text{if }\ds \gamma = N,\, \frac{N - \alpha}{N - 2} < p < \frac{N}{N - 2},
       \\[0.3cm] 
       w^{- \alpha - (N - 2)q}\log^{1 + \beta + (1 + \tau)(p + q)}w &\text{if }\ds \gamma = N,\, p = \frac{N}{N - 2},\\[0.3cm] 
       w^{- \alpha - (N - 2)q}\log^{\beta + (1 + \tau)q}w &\text{if }\ds \gamma = N,\, p > \frac{N}{N - 2}.
    \end{cases}
\end{aligned}
\end{equation}
\end{lemma}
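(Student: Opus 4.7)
The plan is to apply Lemma \ref{lm1}(iii) twice in succession, once to estimate $u$ itself and once to estimate $K_{\alpha,\beta}\ast u^p$, and then combine with a pointwise bound for $u^q$. Since $w(x)=(A+|x|^2)^{1/2}$ satisfies $w\simeq A+|x|$ and $\log w\simeq \log(A+|x|)$ uniformly for $A>e$, we have $w^{-\gamma}\log^{\tau}w\le C(A+|x|)^{-\gamma}\log^{\tau}(A+|x|)$ on $\R^N$. Because $\Phi$ is a constant multiple of $K_{N-2,0}$, the convolution defining $u$ falls under Lemma \ref{lm1}(iii) applied with kernel $K_{N-2,0}$ and parameters $\sigma=\gamma$, $\kappa=\tau$. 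The hypothesis $2<\gamma\le N$ places us in the regime $\sigma>N-(N-2)=2$; the sub-case $N-(N-2)<\sigma<N$ yields the bound for $2<\gamma<N$, while $\sigma=N$ together with $\kappa=\tau>-1$ yields the bound for $\gamma=N$. This proves the first assertion of the lemma.

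Next, raising the $u$-bound to the $p$-th and $q$-th powers we obtain $u^p(x)\le C(A+|x|)^{-\sigma'}\log^{\kappa'}(A+|x|)$ on $\R^N$, where
$$(\sigma',\kappa')=(p(\gamma-2),\,\tau p)\ \text{if }\gamma<N,\qquad (\sigma',\kappa')=(p(N-2),\,(1+\tau)p)\ \text{if }\gamma=N,$$
and similarly for $u^q$ with $q$ in place of $p$. I then apply Lemma \ref{lm1}(iii) a second time, now with the actual potential $K_{\alpha,\beta}$ and with the pair $(\sigma',\kappa')$ just computed. The four regimes to consider are $\sigma'=N-\alpha$ with $1+\beta+\kappa'<0$, then $N-\alpha<\sigma'<N$, then $\sigma'=N$ with $\kappa'>-1$, and finally $\sigma'>N$. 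The first regime translates to $p=(N-\alpha)/(\gamma-2)$ with $\beta+\tau p<-1$ in the $\gamma<N$ case, and to $p=(N-\alpha)/(N-2)$ with $\beta+(1+\tau)p<-1$ in the $\gamma=N$ case, exactly matching the thresholds on $p$ and $\beta$ appearing in the first and fifth lines of \eqref{l52}; similarly, the remaining three regimes produce the thresholds on $p$ appearing in the other lines of the statement.

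To finish, I multiply each of the four pointwise upper bounds for $K_{\alpha,\beta}\ast u^p$ produced by Lemma \ref{lm1}(iii) by the corresponding upper bound for $u^q$, and collapse the exponents of $A+|x|$ and of $\log(A+|x|)$ by adding them. Switching back from $A+|x|$ to $w$, which is legitimate because $w\simeq A+|x|$ and $\log w\simeq \log(A+|x|)$, produces exactly the eight lines of \eqref{l52}: the top four lines come from the $\gamma<N$ branch of Step~2 combined with the four sub-cases of Step~3, and the bottom four lines come from the $\gamma=N$ branch combined with the same four sub-cases.

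The only real obstacle is careful bookkeeping: each line in \eqref{l52} arises from one of the four sub-cases of Lemma \ref{lm1}(iii) paired with one of the two $(\sigma',\kappa')$ values, and one must check that the thresholds on $p$ and $\beta$ stated in each line are precisely the conditions that select the intended sub-case (in particular verifying that $\tau p>-1$ in the third line and $(1+\tau)p>-1$ in the seventh line are implied by the given hypotheses $-1<\tau<1$ and $p>0$). All analytic content is already packaged in Lemma \ref{lm1}(iii), so Steps~1--3 are direct applications and Step~4 reduces to collecting exponents.
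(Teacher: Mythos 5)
Your proof is correct and follows essentially the same approach as the paper: apply Lemma \ref{lm1}(iii) first with $K_{N-2,0}$ and $(\sigma,\kappa)=(\gamma,\tau)$ to bound $u$, then again with $K_{\alpha,\beta}$ and the transferred parameters $(\sigma',\kappa')$ to bound $\K1*u^p$, and finally multiply by the pointwise bound for $u^q$ and collect exponents. The only small imprecision is your parenthetical claim that $\tau p>-1$ in the third line is ``implied'' by $-1<\tau<1$ and $p>0$: it is not (e.g.\ $\tau<0$ and $p=N/(\gamma-2)$ large can give $\tau p\le -1$), which is precisely why the lemma lists $\tau p>-1$ as an explicit hypothesis for that case; your observation is only automatic for the seventh line, where $(1+\tau)p>0>-1$.
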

\begin{proof}
Recall from \eqref{ptt} that
\[\K1(x) = |x|^{-\alpha}\log^{\beta}(1 + |x|)\quad\text{in }\R^N,\]
where $0 \leq \alpha \leq N, \beta > \alpha - N$. From \eqref{Pu}, one has
\[u = K_{N-2,0} * w^{-\gamma}\log^{\tau}w,\]
where $2 < \gamma \leq N$, $-1 < \tau < 1$ and $w(x) = (A + |x|^2)^{\frac{1}{2}}$ with $A > e$ large. We first apply Lemma \ref{lm1}(iii) for $\alpha = N - 2$, $\beta = 0$ and $f = w^{-\gamma}\log^{\tau}w$ so that $\sigma = \gamma$, $\kappa=\tau$. Thus, for all $x \in \R^N$, $u(x)$ satisfies the following estimate:
\begin{equation}\label{lmr0}
u(x) \leq c \left\{
    \begin{aligned}
       &w^{-(\gamma - 2)}\log^{\tau}w &\text{if }&2 < \gamma  < N\\
       &w^{-(N - 2)}\log^{1 + \tau}w &\text{if }& \gamma = N
    \end{aligned}
\right.\quad\text{ in }\R^N.
\end{equation}
In the case $2 < \gamma < N$ one has
\[u(x) \leq cw^{-(\gamma - 2)}\log^{\tau}w \quad\text{in }\R^N.\]
We next apply Lemma \ref{lm1}(iii) again for $f = u^p \leq cw^{-(\gamma - 2)p}\log^{\tau p}w$ so that $\sigma = (\gamma - 2)p$, $\kappa = \tau p$. Then, for all $x \in \R^N$ one has
\begin{align}\label{gln}
(\K1 * u^p)(x) \leq c \left\{
    \begin{aligned}
       &\log^{1 + \beta + \tau p}w&\text{if }&(\gamma - 2)p = N - \alpha,\, \beta + \tau p < -1,\\
       &w^{N - \alpha - (\gamma - 2)p}\log^{\beta + \tau p}w &\text{if }&N  - \alpha < (\gamma - 2)p < N,\\
       &w^{- \alpha}\log^{1 + \beta + \tau p}w &\text{if }&(\gamma - 2)p = N,\, \tau p > -1,\\
       &w^{- \alpha}\log^{\beta}w &\text{if }&(\gamma - 2)p > N.\\
    \end{aligned}
\right.
\end{align}
Similarly, in the case $\gamma = N$ one has
\[u(x) \leq cw^{-(N - 2)}\log^{1 + \tau}w\quad\text{in }\R^N.\]
We apply Lemma \ref{lm1}(iii) for $f = u^p \leq cw^{-(N - 2)p}\log^{(1 + \tau)p}w$ so that $\sigma = (N - 2)p$, $\kappa = (1 + \tau)p$. Thus, for all $x \in \R^N$, we have
\begin{align}\label{gen}
(\K1 * u^p)(x) 
&\leq c \left\{
    \begin{aligned}
        &\log^{1 + \beta + (1 + \tau) p}w&\text{if }&(N - 2)p = N - \alpha,\, \beta + (1 + \tau) p < -1,\\
       &w^{N - \alpha - (N - 2)p}\log^{\beta + (1 + \tau)p}w &\text{if }&N - \alpha < (N - 2)p < N,\\
       &w^{- \alpha}\log^{1 + \beta + (1 + \tau)p}w &\text{if }&(N - 2)p = N,\\
       &w^{- \alpha}\log^{\beta}w &\text{if }&(N - 2)p > N.\\
    \end{aligned}
\right.
\end{align}
Hence, we combine \eqref{gln}-\eqref{gen} to obtain
\begin{align*}
(\K1 * u^p)(x)\leq C \left\{
    \begin{aligned}
       &\log^{1 + \beta + \tau p}w&\text{if }&\gamma < N,\, p = \frac{N - \alpha}{\gamma -2},\, \beta + \tau p < -1,\\ 
       &w^{N - \alpha - (\gamma - 2)p}\log^{\beta + \tau p}w &\text{if }&\gamma < N,\, \frac{N - \alpha}{\gamma - 2} < p < \frac{N}{\gamma - 2},\\
       &w^{- \alpha}\log^{1 + \beta +\tau p}w &\text{if }&\gamma < N,\, p = \frac{N}{\gamma - 2},\, \tau p>-1\\
       &w^{- \alpha}\log^{\beta}w &\text{if }&\gamma < N,\, p > \frac{N}{\gamma - 2},\\
       &\log^{1 + \beta + (1 + \tau) p}w&\text{if }&\gamma = N,\, p = \frac{N - \alpha}{N -2},\, \beta + (1 + \tau) p < -1,\\ 
       &w^{N - \alpha - (N - 2)p}\log^{\beta + (1 + \tau)p}w &\text{if }&\gamma = N,\, \frac{N - \alpha}{N - 2} < p < \frac{N}{N - 2},\\
       &w^{- \alpha}\log^{1 + \beta + (1 + \tau)p}w &\text{if }&\gamma = N,\, p = \frac{N}{N - 2},\\
       &w^{- \alpha}\log^{\beta}w &\text{if }&\gamma = N,\, p > \frac{N}{N - 2}.
    \end{aligned}
\right.
\end{align*}
From the above estimates and \eqref{lmr0} we deduce \eqref{l52}.
\vspace{-0.2cm}
\end{proof}
\begin{lemma}\label{lmr2}
    Assume $\alpha = N$, $\beta > 0$, $p,\,q >0$, $2 < \gamma \leq N$ and $\tau = 0$. If $u(x)$ is the function defined in \eqref{Pu}, then for all $x \in \R^N$ one has
\begin{equation*}
u(x) \leq c \left\{
    \begin{aligned}
       &w^{-(\gamma - 2)} &\text{if }& 2 < \gamma < N,\\
       &w^{-(N- 2)}\log w &\text{if }& \gamma = N,
    \end{aligned}
\right.
\end{equation*}
for some constant $c>0$. Furthermore, Thus, there exists $C = C(N, \beta, A, p, q, \gamma) > 0$ such that
\begin{equation}\label{l53}
(\K1 * u^p)u^q(x)
\leq C \begin{cases}
           w^{- (\gamma - 2)(p + q)}\log^{1 + \beta}w &\text{if }\gamma < N,\, 0 < p < \frac{N}{\gamma - 2},\\[0.2cm]
       w^{- N - (N - 2)q}\log^{\beta + q}w &\text{if }\gamma = N,\, p > \frac{N}{N - 2}.
    \end{cases}
\end{equation}
\vspace{-0.2cm}
\end{lemma}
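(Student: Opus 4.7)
The plan is to follow the blueprint of Lemma~\ref{lmr} essentially verbatim, with the simplifications afforded by $\tau=0$ and by the extremal case $\alpha=N$ (which, via \eqref{ptt}, forces $\beta>0$). The argument reduces to two successive applications of Lemma~\ref{lm1}(iii), exactly as in the proof of Lemma~\ref{lmr} but in a much smaller number of branches.

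First, since $\tau=0$, the representation \eqref{Pu} becomes $u=\Phi*w^{-\gamma}$. Treating $\Phi$ as the kernel $K_{N-2,0}$ (up to a harmless multiplicative constant), I would invoke Lemma~\ref{lm1}(iii) with the formal parameters $\alpha\rightsquigarrow N-2$, $\beta\rightsquigarrow 0$, $\sigma=\gamma$, $\kappa=0$, and $f=w^{-\gamma}$. Since $N-2<N$ we use the first block of bounds: the subcase $2<\gamma<N$ falls under ``$N-\alpha<\sigma<N$'' (as $2=N-(N-2)<\gamma<N$) and yields $u(x)\le c\,w^{-(\gamma-2)}$, while the subcase $\gamma=N$ falls under ``$\sigma=N$, $\kappa>-1$'' and yields $u(x)\le c\,w^{-(N-2)}\log w$. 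This reproduces the pointwise upper bound on $u$ stated in the lemma.

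Next, I would apply Lemma~\ref{lm1}(iii) a second time, now with the genuine parameters $\alpha=N$ and $\beta>0$ of the kernel $\K1$, landing in the second block of bounds. In the subcase $2<\gamma<N$ and $0<p<\tfrac{N}{\gamma-2}$, Step~1 gives $u^p\le c\,w^{-(\gamma-2)p}$, so with $\sigma=(\gamma-2)p\in(0,N)$ and $\kappa=0$ the first alternative of the $\alpha=N$ block produces $(\K1*u^p)(x)\le C\,w^{-(\gamma-2)p}\log^{1+\beta}w$; multiplying by $u^q\le c\,w^{-(\gamma-2)q}$ yields the first line of \eqref{l53}. In the subcase $\gamma=N$ and $p>\tfrac{N}{N-2}$, Step~1 gives $u^p\le c\,w^{-(N-2)p}\log^p w$, so with $\sigma=(N-2)p>N$ and $\kappa=p$ the ``$\sigma>N$'' branch delivers $(\K1*u^p)(x)\le C\,w^{-N}\log^\beta w$; multiplying by $u^q\le c\,w^{-(N-2)q}\log^q w$ yields the second line of \eqref{l53}.

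I do not foresee any genuine obstacle: the proof is a mechanical bookkeeping exercise in selecting the correct branch of Lemma~\ref{lm1}(iii) at each stage. The only small point demanding verification is the hypothesis $\sigma\ge N-\alpha_{\text{lm}}$ of that lemma in each invocation, which is immediate ($\sigma=\gamma>2$ in Step~1, where $\alpha_{\text{lm}}=N-2$, and $\sigma>0$ in Step~2, where $\alpha_{\text{lm}}=N$). Notice also that, because $\tau=0$, many of the log-exponent alternatives that appeared in Lemma~\ref{lmr} collapse or become inaccessible here, which is precisely why the present statement has only two branches.
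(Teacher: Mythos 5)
Your proposal is correct and follows the paper's argument essentially verbatim: both derive the pointwise bound on $u$ by applying Lemma~\ref{lm1}(iii) with kernel $K_{N-2,0}$ (parameters $\sigma=\gamma$, $\kappa=0$), then apply Lemma~\ref{lm1}(iii) a second time in the $\alpha=N$ block with $f=u^p$, and finally multiply by the bound on $u^q$. The branch selections you identify at each stage are exactly those the paper uses.
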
\label{lmrN}
\begin{proof}
We first apply a similar argument to that in \eqref{lmr0} with $\tau = 0$ to obtain the following estimates.
\begin{equation}\label{lmrN0}
u(x) \leq c \left\{
    \begin{aligned}
       &w^{-(\gamma - 2)} &\text{if }&2 < \gamma  < N\\
       &w^{-(N - 2)}\log w &\text{if }& \gamma = N
    \end{aligned}
\right.\quad\text{ in }\R^N.
\end{equation}
Since $\alpha = N$ and $\beta > 0$, one has
\[K_{N, \beta}(x) = |x|^{-N}\log^{\beta}(1 + |x|)  \quad\text{in }\R^N.\]
We apply Lemma \ref{lm1}(iii) for $K_{N, \beta}$ and $f = u^p$, where $u(x)$ satisfies \eqref{lmrN0}. Thus, for all $x \in \R^N$ one has
\begin{align}\label{lmrN1}
(K_{N, \beta} * u^p)(x) \leq c \left\{
    \begin{aligned}
       &w^{ - (\gamma - 2)p}\log^{1 + \beta} w &\text{if }&0 < \gamma < N, 0 < (\gamma - 2)p < N,\\
       &w^{- N}\log^{\beta}w &\text{if }&\gamma  = N, (N - 2)p > N.\\
    \end{aligned}
\right.
\end{align}
Combining now \eqref{lmrN0} and \eqref{lmrN1} we deduce \eqref{l53}.
\vspace{-0.2cm}
\end{proof}
We are now ready to proceed with the construction of a positive solution to \eqref{P+} for varying cases regarding the value of $p, q, p + q$ and $\beta$.
\vspace{-0.2cm}
\begin{flalign*}
\textbf{Case 1:}\quad p > \frac{N - \alpha}{N - 2},\quad q > \frac{N - \alpha}{N - 2},\quad p + q > \frac{2N - \alpha}{N - 2}\quad\text{and}\quad \beta > \alpha - N.& &   
\end{flalign*}
We divide our analysis  into two separate cases.\\
\medskip
\noindent{\bf Case 1a:} $\frac{N - \alpha}{N - 2} < p \leq \frac{N}{N - 2}$. This means $N - \alpha < (N - 2)p \leq N$ and we also have $(N - 2)(p + q) > 2N - \alpha$. Thus, we choose $\gamma < N$ close to $N$ so that
\begin{equation}\label{P+1a1}
N>    (\gamma - 2)p > N - \alpha\quad\text{and}\quad (\gamma - 2)(p + q) > 2N - \alpha > N + \gamma - \alpha.
\end{equation}
Take $u$ as defined in \eqref{Pu}, with $\gamma$ satisfying \eqref{P+1a1} and $\tau = 0$. Then, by the second estimate in \eqref{l52} one has
\begin{align}\label{P+1a}
(\K1 * u^p)u^q(x) \leq c w^{N - \alpha - (\gamma - 2)(p + q)}\log^{\beta}w \leq c w^{-\gamma} \quad\text{in }\R^N.
\end{align}
Combining \eqref{P+1a} with \eqref{dPl} in Lemma \ref{lml} we find
\begin{equation*}
    \Delta^2 u - \lambda \Delta u \geq \frac{\lambda}{2}w^{-\gamma} \geq C(\K1 * u^p)u^q \quad\text{in }\R^N.
\end{equation*}
\medskip

\noindent{\bf Case 1b:} $p > \frac{N}{N - 2}$. Let $u$ be given by \eqref{Pu} where $\gamma = N$ and $\tau = 0$. Since $p > \frac{N}{N - 2}$ and $q > \frac{N - \alpha}{N - 2}$, we apply the last estimate in \eqref{l52} to obtain
\begin{align}\label{P+1b}
(\K1 * u^p)u^q(x) \leq c w^{- \alpha - (N - 2)q}\log^{\beta + q}w \leq c w^{-N}.
\end{align}
Combining \eqref{P+1b} with \eqref{dPl} in Lemma \ref{lml} we find
\begin{equation*}
    \Delta^2 u - \lambda \Delta u \geq \frac{\lambda}{2}w^{-N} \geq C(\K1 * u^p)u^q \quad\text{in }\R^N.
\end{equation*}
Thus, for suitable constants $C = C(N, \alpha, \beta, \lambda, p, q, A) > 0$ in the case 1a and 1b separately, $Cu$ is a solution of \eqref{P+}.\\
\vspace{-0.2cm}
\begin{flalign*}
\textbf{Case 2:}\quad p = \frac{N - \alpha}{N - 2},\quad q > \frac{N}{N - 2}\quad\text{and}\quad \beta < -1.& & 
\end{flalign*}
Let $u$ be given by \eqref{Pu} where $\gamma = N$ and $\tau > -1$ close to $-1$ so that
\[\beta + (1 + \tau)p < -1.\]
Since $p = \frac{N - \alpha}{N - 2}$, by applying the fifth estimate in \eqref{l52} one has
\begin{align}\label{P+2}
(\K1 * u^p)u^q(x) \leq c w^{ - (N - 2)q}\log^{1 + \beta + (1 + \tau)(p + q)}w \leq c w^{-N}\log^{\tau}w\quad\text{in }\R^N.
\end{align}
Combining \eqref{P+2} with \eqref{dPl} in Lemma \ref{lml} we find
\begin{equation*}
    \Delta^2 u - \lambda \Delta u \geq \frac{\lambda}{2}w^{-N}\log^{\tau}w \geq C(\K1 * u^p)u^q \quad\text{in }\R^N.
\end{equation*}
Thus, for a suitable constant $C = C(N, \alpha, \beta, \lambda, p, q, A) > 0$, $Cu$ is a solution of \eqref{P+}.\\
\vspace{-0.2cm}
\begin{flalign*}
\textbf{Case 3:}\quad p > \frac{N}{N - 2},\quad q = \frac{N - \alpha}{N - 2}\quad\text{and}\quad \beta < -1.& & 
\end{flalign*}
Let $u$ be given by \eqref{Pu} where $\gamma = N$ and $\tau > -1$ close to $-1$ so that
\[\tau > \beta + (1 + \tau)q.\]
Since $p > \frac{N}{N - 2}$ and $q = \frac{N - \alpha}{N - 2}$, by the last estimate in \eqref{l52} one has
\begin{align}\label{P+3}
(\K1 * u^p)u^q(x) \leq c w^{- \alpha - (N - 2)q}\log^{\beta + (1 + \tau)q}w \leq c w^{-N}\log^{\tau}w\quad\text{in }\R^N.
\end{align}
Combining \eqref{P+3} with \eqref{dPl} in Lemma \ref{lml} we find
\begin{equation*}
    \Delta^2 u - \lambda \Delta u \geq \frac{\lambda}{2}w^{-N}\log^{\tau}w \geq C(\K1 * u^p)u^q \quad\text{in }\R^N.
\end{equation*}
Thus, for a suitable constant $C = C(N, \alpha, \beta, \lambda, p, q, A) > 0$, $Cu$ is a solution of \eqref{P+}.\\
\vspace{-0.2cm}
\begin{flalign*}
\textbf{Case 4:}\quad p > \frac{N - \alpha}{N - 2},\quad q > \frac{N - \alpha}{N - 2},\quad p + q = \frac{2N - \alpha}{N - 2}\quad\text{and} \quad\beta < -1.& &   
\end{flalign*}
Let $u$ be given by \eqref{Pu} where $\gamma = N$ and $\tau > -1$ close to $-1$ so that
\[\tau > \beta + (1 + \tau)(p + q).\]
Since $p, q > \frac{N -  \alpha}{N - 2}$ and $p + q = \frac{2N - \alpha}{N - 2}$, we have  $N - \alpha < (N - 2)p < N$. Then, by the sixth estimate in \eqref{l52}  one has
\begin{align}\label{P+4}
(\K1 * u^p)u^q(x) \leq c w^{N -\alpha - (N - 2)(p + q)}\log^{\beta + (1 + \tau)(p + q)}w \leq c w^{-N}\log^{\tau}w.
\end{align}
Combining \eqref{P+4} with \eqref{dPl} in Lemma \ref{lml} we find
\begin{equation*}
    \Delta^2 u - \lambda \Delta u \geq \frac{\lambda}{2}w^{-N}\log^{\tau}w \geq C(\K1 * u^p)u^q \quad\text{in }\R^N.
\end{equation*}
Thus, for a suitable constant $C = C(N, \alpha, \beta, \lambda, p, q, A) > 0$, $Cu$ is a solution of \eqref{P+}.\\
\vspace{-0.2cm}
\begin{flalign*}
\textbf{Case 5:}\quad p = \frac{N - \alpha}{N - 2},\quad q = \frac{N}{N - 2}\quad\text{and}\quad \beta < -2.& &   
\end{flalign*}
Let $u$ be given by \eqref{Pu} where $\gamma = N$ and $\tau > -1$ close to $-1$ so that
\[\tau > 1 + \beta + (1 + \tau)(p + q).\]
Since $p = \frac{N - \alpha}{N - 2}$, by the fifth estimate in \eqref{l52} one has
\begin{align}\label{P+5}
(\K1 * u^p)u^q(x) \leq c w^{- (N - 2)q} \log^{1 + \beta + (1 + \tau)(p + q)}w \leq c w^{-N}\log^{\tau}w.
\end{align}
Combining \eqref{P+5} with \eqref{dPl} in Lemma \ref{lml} we find
\begin{equation*}
    \Delta^2 u - \lambda \Delta u \geq \frac{\lambda}{2}w^{-N}\log^{\tau}w \geq C(\K1 * u^p)u^q \quad\text{in }\R^N.
\end{equation*}
Thus, for a suitable constant $C = C(N, \alpha, \beta, \lambda, p, q, A) > 0$, $Cu$ is a solution of \eqref{P+}.\\
\vspace{-0.2cm}
\begin{flalign*}
\textbf{Case 6:}\quad p = \frac{N}{N - 2},\quad q = \frac{N - \alpha}{N - 2}\quad\text{and}\quad \beta < -2.& & 
\end{flalign*}
Let $u$ be given by \eqref{Pu} where $\gamma = N$ and $\tau > -1$ close to $-1$ so that
\[\tau > 1 + \beta + (1 + \tau)(p + q).\]
Since $p = \frac{N}{N - 2}$, by the seventh estimate in \eqref{l52} one has
\begin{align}\label{P+6}
(\K1 * u^p)u^q(x) \leq c w^{-\alpha - (N - 2)q}\log^{1 + \beta + (1 + \tau)(p + q)}w \leq c w^{-N}\log^{\tau}w.
\end{align}
Combining \eqref{P+6} with \eqref{dPl} in Lemma \ref{lml} we find
\begin{equation*}
    \Delta^2 u - \lambda \Delta u \geq \frac{\lambda}{2}w^{-N}\log^{\tau}w \geq C(\K1 * u^p)u^q \quad\text{in }\R^N.
\end{equation*}
Thus, for a suitable constant $C = C(N, \alpha, \beta, \lambda, p, q, A) > 0$, $Cu$ is a solution of \eqref{P+}.
\qed
\section{Proof of Theorem \ref{thm4}} 
Assume $N \geq 3$, $\alpha = N$, $\beta > 0$ and $q > 0$. Since $p \geq 1$, we split our proof into two cases: $1 \leq p \leq \frac{N}{N - 2}$ and $p > \frac{N}{N - 2}$.\\
\noindent{\bf Case 1:} $1 \leq p \leq \frac{N}{N - 2}$. This means $N - 2 \leq (N - 2)p \leq N$ and we also have $(N - 2)(p + q) > N$. Thus, we choose $\gamma < N$ close to $N$ so that 
\begin{equation}\label{thm41}
    (\gamma - 2)p < N \quad\text{and}\quad (\gamma - 2)(p + q) >N> \gamma.
\end{equation}
Let $u$ be the function defined in \eqref{Pu}, with $\gamma$ satisfying \eqref{thm41} and $\tau = 0$. Then, by the first estimate in \eqref{l53} one has
\begin{equation}\label{thm42}
   (\K1 * u^p)u^q(x) \leq w^{- (\gamma - 2)(p + q)}\log^{1+\beta}w \leq c w^{-\gamma} \quad\text{in }\R^N. 
\end{equation}
Combining \eqref{thm42} with \eqref{dPl} in Lemma \ref{lml} we find
\begin{equation*}
    \Delta^2 u - \lambda \Delta u \geq \frac{\lambda}{2}w^{-\gamma} \geq C(\K1 * u^p)u^q \quad\text{in }\R^N.
\end{equation*}
\noindent{\bf Case 2:} $p > \frac{N}{N - 2}$. Let $u$ be given by \eqref{Pu} where $\gamma = N$ and $\tau = 0$. Since $p > \frac{N}{N - 2}$ and $q > 0$, we apply the second estimate in \eqref{l53} to obtain
\begin{equation}\label{thm43}
       (\K1 * u^p)u^q(x) \leq w^{-N - (N - 2)q}\log^{\beta + q}w \leq c w^{-N} \quad\text{in }\R^N.
\end{equation}
Combining \eqref{thm43} with \eqref{dPl} in Lemma \ref{lml} we find
\begin{equation*}
    \Delta^2 u - \lambda \Delta u \geq \frac{\lambda}{2}w^{-N} \geq C(\K1 * u^p)u^q \quad\text{in }\R^N.
\end{equation*}
Thus, for suitable constants $C = C(N, \beta, \lambda, p, q, A) > 0$ in the Case 1 and Case 2 separately, $Cu$ is a solution of \eqref{P+}.
\qed

\section*{Author contributions} 

Zhe Yu has conducted the research included in this paper and prepared the manuscript during the submission process.

\section*{Data availability}
No datasets were generated or analyzed during the current research.
\section*{Declarations}
\section*{Conflict of interest} 
The author declares no conflict of interest.

\section*{Competing interests} The author declares no competing interests.

\end{document}